\newtheorem{thm}{Theorem}[section]
\newtheorem{con}[thm]{Conjecture}
\newtheorem{cor}[thm]{Corollary}
\newtheorem{defn}[thm]{Definition}
\newtheorem{lem}[thm]{Lemma}
\newtheorem{prop}[thm]{Proposition}
\newtheorem{rem}[thm]{Remark}
\numberwithin{equation}{section}
\newcommand{\bN}{{\mathbb{N}}}
\newcommand{\bR}{{\mathbb{R}}}
  \newcommand{\M}{{\mathcal{M}}}
  \newcommand{\R}{{\mathcal{R}}}
  \newcommand{\X}{{\mathcal{X}}}
  \newcommand{\Y}{{\mathcal{Y}}}
\begin{document}

\begin{frontmatter}

\title{A Constructive Characterisation of Circuits in the Simple $(2,2)$-sparsity Matroid}
\author{Anthony Nixon}
\address{Heilbronn Institute for Mathematical Research, Department of Mathematics, University of Bristol, BS8 1TW, tony.nixon@bristol.ac.uk}

\begin{abstract}
We provide a constructive characterisation of circuits in the simple $(2,2)$-sparsity matroid. A circuit is a simple graph $G=(V,E)$
with $|E|=2|V|-1$ and the number of edges induced by any $X \subsetneq V$ is at most $2|X|-2$.
Insisting on simplicity results in the Henneberg 2 operation being enough only when the graph is sufficiently connected. Thus we
introduce $3$ different join operations to complete the characterisation.
Extensions are discussed to when the sparsity matroid is connected and this 
is applied to the theory of frameworks on surfaces to provide a conjectured characterisation of when frameworks on an infinite circular 
cylinder are generically globally rigid.
\end{abstract}

\begin{keyword}
$(k,l)$-circuit, Henneberg 2 operation, rigidity matroid.
\end{keyword}

\end{frontmatter}

\section{Introduction}

For $k,l \in \bN$ a multigraph $G=(V,E)$ is $(k,l)$-tight if $|E|=k|V|-l$ and for every subgraph $G'=(V',E')$ the inequality $|E'|\leq k|V'|-l$ holds. 
It is well known that the edge sets of such multigraphs induce matroids when $l<2k$ \cite{L&S}, \cite{Whi3}; we denote these matroids as 
$M(k,l)$. These multigraphs can be decomposed into unions of trees and map graphs \cite{N-W}, \cite{Tut}, \cite{Whi5}; correspondingly the 
matroids are unions of cycle and bicycle matroids. There is an elegant recursive construction of the bases in $M(k,l)$ due to Fekete and 
Szeg\"{o} \cite{F&Z}. Their result is built on the construction of Tay \cite{Tay2} for $k=l$. In this case a recursive characterisation of 
circuits in $M(k,k)$ can be found as 
a special case of a theorem of Frank and Szeg\"{o} \cite{F&S2} on highly $k$-tree connected multigraphs. These characterisations use 
generalisations of the Henneberg moves \cite{Hen}. However each list of construction moves is insufficient if we restrict to (simple) graphs at each stage of the induction. 

The rigidity of frameworks on surfaces is a situation with exactly this constraint. The relevant graphs are the $(2,l)$-tight  graphs \cite{NOP}, \cite{N&O}. When the $(k,l)$-tight graph is simple, they
still induce a matroid and we denote it as $M^*(k,l)$. Recursive constructions for the bases of $M^*(2,l)$ ($l=2,1$) can be found in
\cite{NOP} and \cite{N&O}. It should be noted, however, that these require more than just Henneberg moves. Throughout we will call a vertex of degree 3 a \emph{node}.
The \emph{Henneberg $2$ move} adds a node to a graph by subdividing an edge and connecting the new vertex to a third existing vertex. Other Henneberg moves will not be relevant here.

In this paper we prove a 
constructive characterisation of circuits in $M^*(2,2)$. The corresponding result
for circuits in $M^*(2,3)=M(2,3)$ was proved by Berg and Jord\'{a}n \cite{B&J}. A circuit in $M^*(2,l)$ $(l=2,3$) necessarily contains a node. However there may be no node that is suitable for an inverse Henneberg 2 operation.
This is the key difficulty in extending constructive characterisations from bases to circuits.

Berg and Jord\'{a}n \cite{B&J} showed that a circuit in $M^*(2,3)$ has a suitable node whenever the graph 
is $3$-connected (in the vertex sense). Thus the combination of the Henneberg $2$ operation and the 2-sum operation \cite{Oxl} which glues two circuits together over a $2$-vertex cut were sufficient to generate all such circuits. Correspondingly our main results are analogues for circuits in $M^*(2,2)$. We require both additional conditions to ensure the (inverse of the) Henneberg 2 operation can be used and more elaborate joining techniques than the 2-sum. 

From here on we define a \emph{circuit} (resp. \emph{multicircuit}) to be the graph (resp. multigraph) induced by a circuit in $M^*(2,2)$ (resp. $M(2,2)$) i.e. a 
graph (resp. multigraph) $G=(V,E)$ with $|E|=2|V|-1$ and for every proper subgraph 
$H=(V',E')\subset G$ we have $|E'|\leq 2|V'|-2$.
Figure \ref{basegs} gives three small examples of circuits.

Let $K_4\sqcup K_4$ denote the unique graph formed by two copies of $K_4$ intersecting in a single edge and let $K_4 \veebar K_4$
denote the unique graph formed from two copies of $K_4$ intersecting in a single vertex by adding any edge.
We will say that $K_5\setminus e$, $K_4 \sqcup K_4$ and $K_4\veebar K_4$ are \emph{base graphs}, see Figure \ref{basegs}.

\begin{center}
\begin{figure}[ht]
\centering
\includegraphics[width=10.4cm]{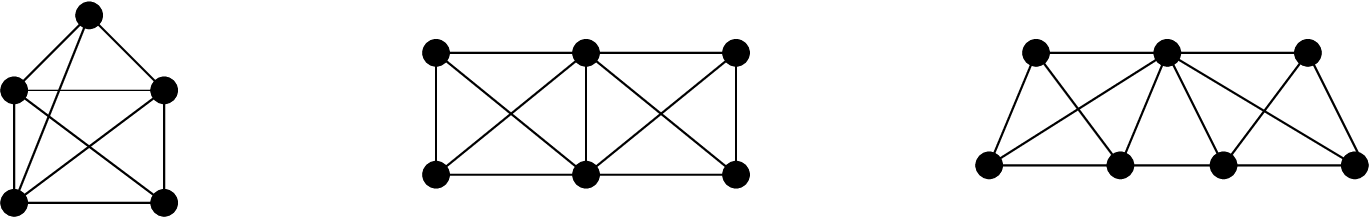}
\caption{From left to right: $K_5\setminus e$, $K_4 \sqcup K_4$ and $K_4\veebar K_4$.}
\label{basegs}
\end{figure}
\end{center}

\vspace{-1cm}

For the inverse Henneberg 2 operation, let $G=(V,E)$ be a graph and let $G_v^{uw}$ denote the graph formed 
by removing a node $v$ from $G$ and adding the edge $uw$ where $u,w \in N(v)$ (the neighbour set
of $v$).
Let $G$ be a circuit and let $v$ be a node in $G$.
The pair of edges $uv,wv$ is \emph{admissible} if $G_v^{uw}$ is a circuit.
A node $v$ is \emph{admissible} if there is $u,w \in N(v)$ such that $uv,wv$ is admissible. Figures \ref{fig:example} illustrates admissibility.

\begin{center}
\begin{figure}[ht]
\centering
\includegraphics[width=7cm]{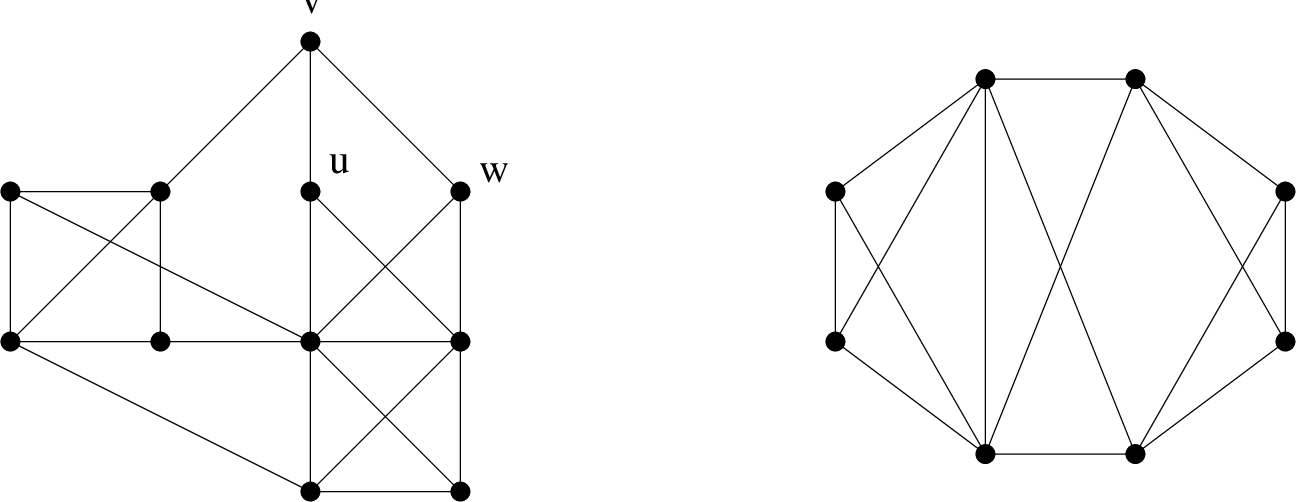}
\caption{$v$ is a non-admissible node in a $3$-connected circuit with no non-trivial 3 edge-cutsets. Choosing $uw$ as the new edge creates a copy of $K_4 \sqcup K_4$ and not choosing $uw$ leaves a degree 2 vertex. $u$ and $w$ are examples of admissible nodes. The second circuit contains no admissible nodes.}
\label{fig:example}
\end{figure}
\end{center}

\vspace{-1cm}

By a \emph{non-trivial $k$-edge cutset} we mean a $k$-edge-cutset in which the two components have at least two vertices. Since every 
circuit contains a degree $3$ vertex, there always exist trivial $3$-edge-cutsets. Since we will primarily be considering non-trivial 
$3$-edge cutsets in $3$-connected graphs we may assume the edges in any such cutset are disjoint.

\begin{thm}\label{recursivethm2}
Let $G$ be a $3$-connected circuit in $M^*(2,2)$ with no non-trivial $3$-edge cutsets and $|V| \geq 6$. Then $G$ has two admissible nodes.
\end{thm}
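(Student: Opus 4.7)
The plan is to adapt the approach of Berg and Jordan \cite{B&J} for circuits in $M^*(2,3)$ to the sparser setting $M^*(2,2)$. First, $3$-connectedness gives $\deg(v)\geq 3$ for every $v$, so the handshake identity $\sum_v\deg(v)=2|E|=4|V|-2$ forces $\sum_v(4-\deg(v))=2$ and hence at least two degree-$3$ vertices. Call a proper subset $X\subsetneq V$ \emph{critical} if $|E(X)|=2|X|-2$. A degree-$3$ vertex $v$ with neighbours $\{a,b,c\}$ is admissible precisely when some pair $\{x,y\}\subseteq\{a,b,c\}$ has $xy\notin E$ and is contained in no critical set $X\subseteq V\setminus\{v\}$; equivalently, $v$ is non-admissible iff for every pair $\{x,y\}\subseteq\{a,b,c\}$ one has $xy\in E$ or there is a critical $X\subseteq V\setminus\{v\}$ with $\{x,y\}\subseteq X$.

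The main tool is the submodular inequality $|E(X)|+|E(Y)|\leq |E(X\cap Y)|+|E(X\cup Y)|$ for $|X\cap Y|\geq 1$: for critical $X,Y$ with $|X\cap Y|\geq 2$ and $X\cup Y\subsetneq V$, both intersection and union are critical, while if $X\cup Y=V$ one still has $|E(X\cap Y)|\geq 2|X\cap Y|-3$. Two consequences of the hypotheses should be kept at hand: (i) the no-non-trivial-$3$-edge-cut assumption is equivalent to the statement that no critical $X$ with $2\leq|X|\leq|V|-2$ has $V\setminus X$ critical, and a direct count then shows that any proper critical $T\subsetneq V\setminus\{v\}$ containing $N(v)$ satisfies $|V\setminus(T\cup\{v\})|\geq 2$ and $|E(T,V\setminus(T\cup\{v\}))|\geq 4$; and (ii) an induced $K_4$ on $\{v,a,b,c\}$ is itself a critical $4$-set whose complement has at least two vertices when $|V|\geq 6$.

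The core argument is by contradiction. Assume at most one degree-$3$ vertex is admissible and fix a non-admissible $v$ with $N(v)=\{a,b,c\}$. Split into cases by the number of edges spanned by $\{a,b,c\}$ (zero, one, two, or three). In each case intersect and union the critical sets certified by non-admissibility via the submodular identity to produce a minimal critical set $T\subseteq V\setminus\{v\}$ with $\{a,b,c\}\subseteq T$. If $T$ is a proper subset of $V\setminus\{v\}$, consequence~(i) forces the complement to be simultaneously large enough to be non-trivial and sparse enough ($|E(S)|\leq 2|S|-4$ for $S=V\setminus(T\cup\{v\})$) that, combined with $3$-connectedness and $|V|\geq 6$, the configuration is pinned down tightly: either a direct contradiction appears, or a second admissible degree-$3$ vertex is located — inside $T$ when $n_3=2$, or by re-running the analysis at a second non-admissible vertex when $n_3\geq 3$. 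The hard part, as in \cite{B&J}, will be the two-edge and triangle ($K_4$) subcases at $v$, where simplicity rules out some Henneberg moves and the submodularity yields fewer critical sets; these subcases are treated by direct counting against the $4$-vertex critical set $\{v,a,b,c\}$ and by using the no-$3$-edge-cut hypothesis to push the obstruction outward across its boundary.
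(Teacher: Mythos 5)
Your high-level strategy (submodularity of critical sets, a degree count giving at least two nodes, exploiting the no-non-trivial-$3$-edge-cut hypothesis) is in the right spirit, but the proposal has several genuine gaps that prevent it from becoming a proof.

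First, your characterisation of admissibility is off in a way that matters. You say $v$ is admissible iff some pair $\{x,y\}\subseteq N(v)$ with $xy\notin E$ lies in no critical $X\subseteq V\setminus\{v\}$. But $V\setminus\{v\}$ is \emph{always} critical in a circuit ($|E|-3=2(|V|-1)-2$), and it contains every such pair, so under your criterion no vertex is ever admissible. The paper's Lemma~\ref{admlem1} requires the obstruction set to exclude the third neighbour $z$ as well (equivalently, $X$ must be a proper subset of $V\setminus\{v\}$, and any such proper critical set automatically misses $z$). Closely related: your ``intersect and union to produce a minimal critical $T\supseteq N(v)$'' cannot work, since the only critical subset of $V\setminus\{v\}$ containing all three neighbours is $V\setminus\{v\}$ itself (this is Lemma~\ref{admlem8}: the union of the certificates is exactly $V\setminus\{v\}$). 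So the set $S=V\setminus(T\cup\{v\})$ you want to count against is empty, and your consequence~(i) is vacuous rather than informative.

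Second, and more fundamentally, you never engage with the $K_4$ obstruction, which is the central new difficulty in the simple $(2,2)$-setting and is what distinguishes this theorem from Berg and Jordan's. Nodes contained in induced copies of $K_4$ are essentially never useful, and there are $3$-connected circuits all of whose nodes sit inside $K_4$'s (Figure~\ref{novk4}); only the no-non-trivial-$3$-edge-cut hypothesis excludes those. The paper's route is to introduce $V_3^*$, the nodes avoiding every induced $K_4$, prove $G[V_3^*]$ is a non-trivial forest (Lemma~\ref{nok4forest}), and prove the quantitative Lemma~\ref{k4fix} guaranteeing two nodes of $V_3^*$ outside certain families of critical sets. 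Finally, the paper's argument is not a case split on the number of edges in $N(v)$ with ``direct counting'': after reducing via Lemmas~\ref{star} and~\ref{oneedgeleaf} to the case of no edges in $N(v)$, it takes a \emph{maximal} node-critical set $X$ (critical on two of the neighbours, with the third neighbour of degree at least $4$), locates a leaf of the forest $G[V_3^*]$ outside $X\cup\{v\}$, and uses Lemma~\ref{lem3.312} to show that if that leaf were non-admissible one could enlarge $X$, contradicting maximality. Your minimality framework doesn't produce this kind of leverage, and ``re-running the analysis at a second non-admissible vertex'' is precisely the step that needs the maximality/forest machinery to terminate.
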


\begin{center}
\begin{figure}[ht]
\centering
\includegraphics[width=9cm]{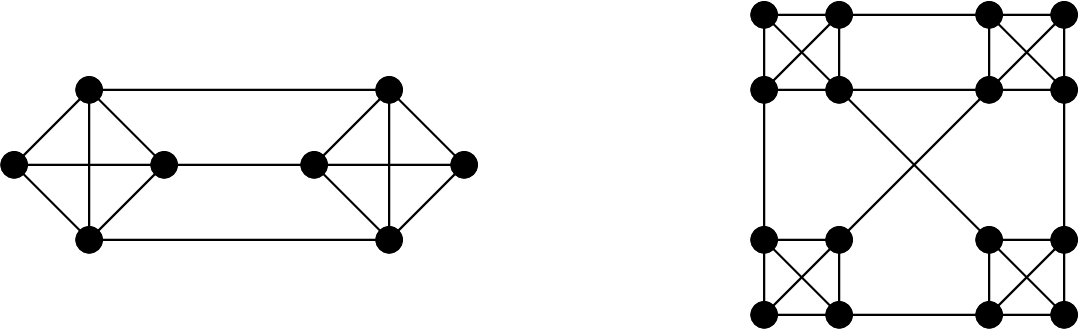}
\caption{Two $3$-connected circuits with no admissible nodes.}
\label{novk4}
\end{figure}
\end{center}

\vspace{-1cm}

In Lemma \ref{admlem9} we see that circuits are 2-connected and 3-edge connected. Indeed Figure \ref{novk4} illustrates circuits not covered by this result. Thus
to extend Theorem \ref{recursivethm2} to cover all circuits in $M^*(2,2)$ we introduce the following 3 operations.
Let $G_1,G_2$ be circuits such that $G_1$ contains an edge $a_1b_1$ and $G_2$ contains a two vertex cut
$a_2,b_2$ within $K_4(a_2,b_2,c_2,d_2)$. A \emph{$1$-join operation} takes $G_1$ and $G_2$ and forms $G_1 \oplus_1 G_2$ by removing $a_1b_1$,
$c_2,d_2$ and $a_2b_2$ and superimposing $a_1,b_1$ onto $a_2,b_2$ and calling the resulting vertices $a,b$.
Secondly, let $G_1,G_2$ be circuits such that $G_i$ contains a two vertex cut $a_i,b_i$ with one component inducing $K_4(a_i,b_i,c_i,d_i)$. A 
\emph{$2$-join operation} takes $G_1$ and $G_2$ and forms $G_1 \oplus_2 G_2$ by removing $c_i,d_i$ and superimposing $a_1,b_1$ onto 
$a_2,b_2$ and calling the resulting vertices $a,b$ and keeping only one copy of the edge $ab$.
Finally, let $G_1,G_2$ be circuits such that $G_i$ contains a node $v_i$ with $N(v_i)=\{a_i,b_i,c_i\}$. A \emph{$3$-join 
operation} takes $G_1$ and $G_2$ and forms $G_1 \oplus_3 G_2$ by deleting $v_1,v_2$ and adding edges $a_1a_2,b_1b_2,c_1c_2$. See Figure \ref{fig:sums}.

\begin{thm}\label{recursivethm1}
A graph $G$ is a circuit in $M^*(2,2)$ if and only if $G$ can be generated recursively from disjoint copies of base graphs by applying Henneberg $2$ moves
within connected components and taking $1$-joins, $2$-joins or $3$-joins of different connected components.
\end{thm}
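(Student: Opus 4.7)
The plan is to prove both directions of the biconditional by induction on $|V|$, with the backward direction using the structural Theorem \ref{recursivethm2} as the key tool. For sufficiency, I would verify that the three base graphs are circuits in $M^*(2,2)$, and that each of the four construction moves (Henneberg 2 applied within a component, and $1$-sum, $2$-sum, $3$-sum taken between components) preserves or produces circuits. The base graph check is a direct verification: count edges to confirm $|E|=2|V|-1$ and check the sparsity inequality $|E(X)|\leq 2|X|-2$ on all proper subsets $X\subsetneq V$. The Henneberg 2 operation is known to preserve circuit status in $M(2,2)$; we additionally verify that the admissibility condition keeps the graph simple. For each $i$-sum we carefully verify the edge count on the combined graph and check the sparsity condition on subsets that split across the sum.

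For necessity, I would argue by strong induction on $|V|$. The base case $|V|=5$ is handled directly: since $|E|=2|V|-1=9$ and the graph is simple on 5 vertices, it must be $K_5\setminus e$. For the inductive step with $|V|\geq 6$, the argument splits into cases according to the connectivity of $G$. If $G$ has a cut vertex then an inverse $1$-sum splits $G$ into two pieces, each of which we verify to be a smaller circuit in $M^*(2,2)$ and therefore reducible by the inductive hypothesis. If $G$ is $2$-connected but has a $2$-vertex-cut we apply an inverse $2$-sum. If $G$ is $3$-connected but contains a non-trivial $3$-edge-cutset we apply an inverse $3$-sum. Finally, if $G$ is $3$-connected with no non-trivial $3$-edge-cutset and $|V|\geq 6$, Theorem \ref{recursivethm2} supplies at least two admissible nodes; reducing at one of them via an inverse Henneberg 2 move yields a smaller circuit to which induction applies.

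The main obstacle will be the sparsity analysis in the cut cases: when we split $G$ along a vertex or edge cut, we must specify precisely how each piece is completed (by adding or identifying certain edges/vertices) so that the pieces are themselves circuits in $M^*(2,2)$, with edge counts and sparsity bounds matching exactly, and with the resulting graphs remaining simple. In particular, the number and arrangement of edges crossing the cut governs whether the pieces can be canonically completed to smaller circuits, and this has to align with the $i$-sum definitions in Section \ref{sumsec}. A secondary difficulty is making sure the small cases ($|V|=6,7$) are treated carefully enough that the base graphs $K_4\sqcup K_4$ and $K_4\veebar K_4$ genuinely appear as atoms of the recursion rather than being further decomposable, which amounts to checking that they admit no admissible node and no non-trivial cut of the relevant type.
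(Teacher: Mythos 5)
Your high-level structure matches the paper: the forward direction follows from verifying that the base graphs are circuits and that the Henneberg~$2$ move and the three sums preserve circuit status (Lemmas~\ref{hen2move}, \ref{1sum}, \ref{2sum}, \ref{3sum}); the backward direction is induction on $|V|$ with $K_5\setminus e$ as the base case and Theorem~\ref{recursivethm2} driving the $3$-connected case. However, there are two problems with your case analysis for the inductive step, one a misreading and one a genuine gap.

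First, a circuit in $M^*(2,2)$ is always $2$-connected (Lemma~\ref{admlem9}(1)), so the case ``$G$ has a cut vertex'' is vacuous. The $1$-sum has nothing to do with cut vertices: a $1$-separation is the inverse move applied at a cutpair $a,b$ with $ab \notin E$, while a $2$-separation is the inverse move at a cutpair $a,b$ with $ab \in E$. You conflate these, so your case split over connectivity levels does not line up with the $i$-sum definitions in Section~\ref{sumsec}.

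Second, and more seriously, your induction does not terminate in the case where every cutpair $a,b$ of $G$ satisfies $ab\in E$ \emph{and} one of the two pieces $G[A\cup\{a,b\}]$ or $G[B\cup\{a,b\}]$ is isomorphic to $K_4$. In that situation the $2$-separation produces a copy of $K_4 \sqcup K_4$ together with a graph isomorphic to $G$ itself, so nothing gets smaller and the inductive hypothesis cannot be invoked. You treat the existence of the atoms $K_4\sqcup K_4$ and $K_4\veebar K_4$ as a small-cases issue, but in fact there is an infinite family of circuits of this kind (arbitrarily large graphs every one of whose $2$-vertex cuts peels off only a $K_4$). The paper's key additional idea here is to pass to a multigraph $G^-$ obtained by contracting each such $K_4$-block to a doubled edge, prove a multigraph analogue of Theorem~\ref{recursivethm2} (Proposition~\ref{3conmulti}), and use the admissible node found in $G^-$ to make progress on $G$; the genuine base graphs emerge only when $G^-$ has at most five vertices. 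Without this step, or something equivalent, the necessity direction of the theorem is not established.
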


\begin{center}
\begin{figure}[ht]
\centering
\subfigure{\includegraphics[width=7.5cm]{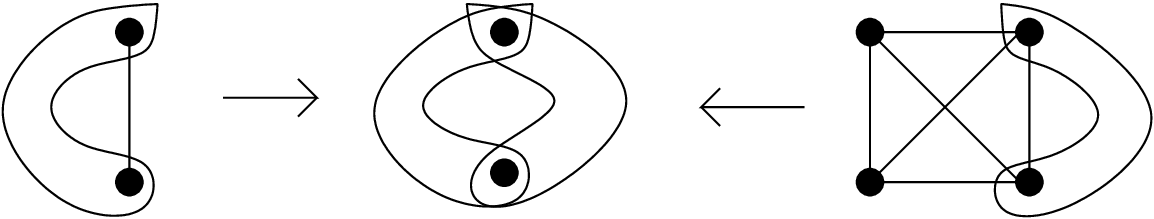}}\vspace{0.4cm}
\subfigure{\includegraphics[width=7.5cm]{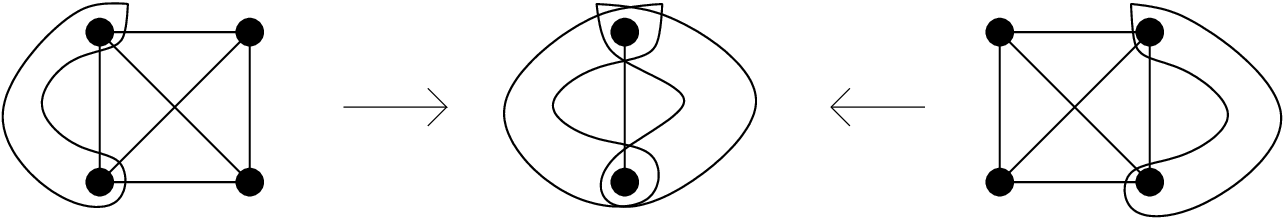}}\vspace{0.4cm}
\subfigure{\includegraphics[width=7.5cm]{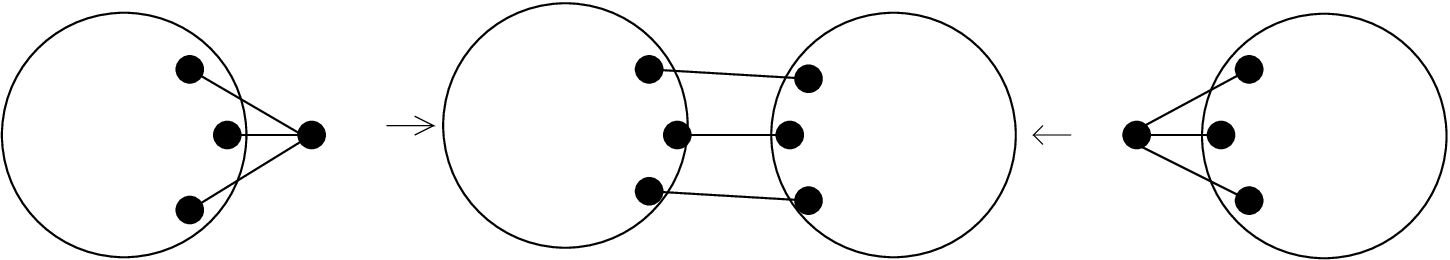}}
\caption{The $1$-, $2$- and $3$-join operations.}
\label{fig:sums}
\end{figure}
\end{center}

\vspace{-1cm}

\subsection{Outline}

In Section \ref{ansec} we prove Theorem \ref{recursivethm2}. We start with some elementary properties of circuits culminating in Lemma \ref{admlem1} where we establish two blocks to admissibility: (a) preserving simplicity and (b) preserving subgraph sparsity.
The key novelty in Section \ref{ansec} is in dealing with (a). Proposition \ref{prop:k4fix} establishes the level of connectivity required to guarantee nodes not contained in copies of $K_4$. Combining this with Lemma \ref{star} largely allows us to reduce to (b), which is considered in Subsection \ref{subsec:guarantee}. This follows the method of \cite{B&J} establishing structural results for circuits with non-admissible nodes. 
The proof of Theorem \ref{recursivethm2} is completed by deducing from Proposition \ref{prop:k4fix} that a special subforest of nodes is non-empty and combining this with these structural results. 

In Section
\ref{sumsec} we analyse the $1$-, $2$- and $3$-join operations which extend standard matroid sum techniques. There is one final technical point needed. In Section \ref{sec:recursion} we translate a circuit in $M^*(2,2)$ into a circuit in $M(2,2)$ in order to establish admissibility in circuits which are not sufficiently connected but have special structure. We then combine the results up to this point to prove Theorem \ref{recursivethm1}. 

In Section \ref{sec:rigidity} we consider connectedness in $M^*(2,2)$ and obtain a precise analogue of \cite[Theorem $3.2$]{J&J}. This is used to link our results to the unique realisation problem for frameworks in $3$-dimensions supported on an infinite circular cylinder. We finish by conjecturing a combinatorial description of when such a realisation is unique, Conjecture \ref{cylinderglobal}, and outlining some extensions.

\subsection{Comparing Constructions}

In \cite{F&S2} it was shown that all circuits in $M(2,2)$ can be generated from a single loop using Henneberg 2 operations. Figures 
\ref{basegs} and \ref{novk4} give examples of graphs for which this construction requires multigraphs in the intermediate steps. Moreover repeated application of, say, $3$-join operations on these examples give arbitrarily large circuits with no admissible nodes.

Since every node in each of these examples is contained in a copy of $K_4$ it would be natural to consider a recursive operation in which a copy of $K_4$ was contracted to a single vertex.
In \cite{N&O} a recursive construction was given for bases in $M^*(2,1)$ which included exactly such an operation. 
While it is possible that there is a construction that utilises this type of operation it is not clear what the construction should be. The operation used in \cite{N&O}, expanding a vertex into a copy of $K_4$, need not preserve the circuit property (see Lemma \ref{admlem9}) and the inverse operation need not preserve simplicity. In \cite{N&O} this led to the use of vertex splitting (the inverse of contracting an edge in a triangle). However this also has similar issues for circuits. One advantage of our construction lies in the fact that the $1$-, $2$- and $3$-join moves and their inverses preserve the circuit property.

\subsection{Preliminaries}

In this paper graphs have no loops or multiple edges, while multigraphs may have both. If $G=(V,E)$ is a graph with $v \in V$ then 
$d_G(v)$ denotes the degree of $v$ in $G$ and $N(v)$ denotes the neighbour set of $v$.

Let $f(H)=2|V'|-|E'|$.
For $X\subset V$ we let $i_G(X)$ denote the number of edges in the subgraph of $G$ induced by $X$. We drop the subscript
when the graph is clear from the context. If $X$ and $Y$ are disjoint subsets of the vertex set $V$ of a given graph $G$, then we use 
$d(X,Y)$ to denote the number of edges from $X$ to $Y$ and $d(X):=d(X,V\setminus X)$. 

Let $X_1,X_2$ be subsets of $V$ and $X_1',X_2'$ their induced subgraphs. 
Then $i(X_1\cup X_2)=i(X_1)+i(X_2)-i(X_1\cap X_2)+d(X_1,X_2)$ whereas $f(X_1'\cup X_2')=f(X_1')+f(X_2')-f(X_1'\cap X_2')$. Hence when 
$d(X_1,X_2)>0$ these counting notions will have different results.

\section{Admissible Nodes}
\label{ansec}

In this section we establish when the Henneberg 2 operation and its inverse preserve the circuit property. The first part of this is 
elementary and we omit the proof.

\begin{lem}\label{hen2move}
 Let $G'$ be formed from $G$ by a Henneberg $2$ move and let $G$ be a circuit. Then $G'$ is a circuit.
\end{lem}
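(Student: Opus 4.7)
The plan is to verify the two defining properties of a circuit in $M^*(2,2)$ directly, namely the global edge count and the sparsity inequality on proper subgraphs, together with checking that simplicity is preserved.

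First I would establish the edge count. A Henneberg $2$ move on $G$ subdivides some edge $xy \in E(G)$ (removing $xy$) and introduces a new vertex $v$ adjacent to $x$, $y$, and a third vertex $z \in V(G)\setminus\{x,y\}$. Hence $|V(G')| = |V(G)|+1$ and $|E(G')| = |E(G)| - 1 + 3 = |E(G)|+2$. Since $|E(G)| = 2|V(G)|-1$, we obtain $|E(G')| = 2|V(G')|-1$, as required. Simplicity of $G'$ is immediate: $v$ is a new vertex with three distinct neighbours, and no other adjacencies change except the deletion of $xy$.

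For the sparsity condition I would let $X' \subsetneq V(G')$ be a nonempty proper subset and show $i_{G'}(X') \leq 2|X'|-2$ by splitting into two cases according to whether $v \in X'$. If $v \notin X'$, then $X' \subseteq V(G)$ and $i_{G'}(X') \leq i_G(X')$ (equality unless $\{x,y\} \subseteq X'$ in which case we lose one edge). When $X' \subsetneq V(G)$ the sparsity of $G$ gives the bound; when $X' = V(G)$ we compute $i_{G'}(X') = |E(G)|-1 = 2|X'|-2$ directly. If $v \in X'$, set $X = X'\setminus\{v\}$ (so $X \subsetneq V(G)$, since otherwise $X' = V(G')$) and let $k = |\{x,y,z\}\cap X|$, so the number of edges at $v$ inside $X'$ is $k$. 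The delicate point is whether the edge $xy$ survives in $G'[X]$: if $\{x,y\}\not\subseteq X$ then $k \leq 2$ and $i_{G'}(X') = i_G(X) + k \leq (2|X|-2)+2 = 2|X'|-2$; if $\{x,y\}\subseteq X$ then $i_{G'}(X') = (i_G(X)-1)+k$, and since $k \in \{2,3\}$ here, in either subcase this is $\leq (2|X|-2)-1+k \leq 2|X| = 2|X'|-2$.

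The step I would expect to be the most delicate is the second case, specifically the subcase $\{x,y,z\}\subseteq X$ with $k=3$, which is tight: the extra edge gained at $v$ is exactly balanced by the lost edge $xy$ and the extra $2|X'| - 2|X| = 2$ capacity from adjoining $v$. Everything else is just bookkeeping, and the edge-count exchange of the Henneberg $2$ move is what makes the inequality work cleanly in every configuration.
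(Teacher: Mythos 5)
The paper states Lemma \ref{hen2move} without proof, remarking only that the forward Henneberg $2$ move is ``far less troublesome'' than its inverse; your direct verification is exactly the routine computation the author is implicitly deferring. The edge count, simplicity check, and case split on whether the new vertex $v$ lies in $X'$ (and, within that, whether the subdivided edge $xy$ has both endpoints in $X$) are all handled correctly, and the tight subcase $\{x,y,z\}\subseteq X$ with $i_G(X)=2|X|-2$ is indeed where the bound is achieved with equality. This is a correct and complete proof of the intended kind.
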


Under what conditions there is an inverse Henneberg 2 move for which the corresponding statement holds is the subject of this section.

\subsection{Basic Properties of Circuits}

Let $G=(V,E)$.
We say that a subset $X\subset V$ is \emph{critical} if $i(X)=2|X|-2$.
The following is a simple analogue of \cite[Lemma $2.3$]{B&J} and we omit the proof.

\begin{lem}\label{admlem2}
 Let $G=(V,E)$ be a circuit and let $X, Y \subset V$ be critical such that $|X \cap Y| \geq 1$ and 
$|X \cup Y| \leq |V|-1$. Then $X\cap Y$ and $X \cup Y$ are both critical, and 
$d(X \setminus Y, Y \setminus X)=0$.
\end{lem}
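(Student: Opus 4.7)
The plan is a standard submodularity argument for the edge-counting function $i$. For any two subsets $X_1, X_2 \subseteq V$, counting each edge according to the location of its endpoints yields the identity
\[
i(X_1 \cup X_2) + i(X_1 \cap X_2) = i(X_1) + i(X_2) + d(X_1 \setminus X_2,\, X_2 \setminus X_1),
\]
which is essentially the relation recorded in the preliminaries. This is the only tool I intend to use.

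First I would verify that the circuit sparsity bound $i(Z) \leq 2|Z|-2$ applies to both $Z = X \cap Y$ and $Z = X \cup Y$. The hypothesis $|X \cup Y| \leq |V|-1$ makes $X \cup Y$ a proper vertex subset of $V$, and since $X \cap Y \subseteq X \cup Y$ it too is a proper subset; it is nonempty by hypothesis. The only case requiring a moment's thought is $|X \cap Y|=1$, where the bound degenerates to the trivial $0 \leq 0$.

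Next I would add the two sparsity bounds, use $|X|+|Y|=|X \cup Y|+|X \cap Y|$, and substitute the criticality assumptions $i(X)=2|X|-2$ and $i(Y)=2|Y|-2$ to obtain
\[
i(X \cup Y)+i(X \cap Y) \leq 2|X|+2|Y|-4 = i(X)+i(Y).
\]
Feeding this into the identity above forces $d(X \setminus Y, Y \setminus X)=0$ together with equality in both sparsity bounds, and equality is precisely the statement that $X \cap Y$ and $X \cup Y$ are critical.

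The argument is pure submodularity and I do not anticipate any real obstacle. The only points of bookkeeping are ensuring that the circuit bound legitimately applies to $X \cup Y$ (guaranteed by $|X \cup Y| \leq |V|-1$) and handling the degenerate case $|X \cap Y|=1$ correctly; no structural information about $M^*(2,2)$ beyond the definitions of circuit and critical set is required.
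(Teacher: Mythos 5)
Your proof is correct, and it is the standard submodularity argument. The paper itself omits the proof of this lemma, noting only that it is ``a simple analogue of Berg and Jordan's Lemma 2.3,'' and the argument you give — apply the identity $i(X \cup Y) + i(X \cap Y) = i(X) + i(Y) + d(X\setminus Y, Y\setminus X)$, bound both left-hand terms using the $(2,2)$-sparsity of proper subsets (here the hypothesis $|X\cup Y|\leq |V|-1$ is exactly what is needed for $X\cup Y$, and $X\cap Y$ is a nonempty subset of it), substitute criticality on the right, and conclude $d=0$ together with equality throughout — is precisely the content of that cited lemma. Your handling of the degenerate case $|X\cap Y|=1$ is also right: the sparsity bound $i(\{v\})\leq 0$ is trivially satisfied and criticality of a singleton is automatic.
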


Let $G=(V,E)$ be a circuit. For any critical set $X \subset V$, $G[X]$ is connected but need not be $2$-connected.

\begin{lem}\label{admlem9}
Let $G$ be a circuit. Then $G$ is $2$-connected and $3$-edge-connected.
\end{lem}

\begin{proof}
Let $G=(V,E)$. Suppose there exists $v \in V$ such that $G \setminus v$ has a bipartition $A,B$ with no edges from 
$A$ to $B$.
\begin{eqnarray*}
2|V|-1=|E| &=& |E(A \cup v)| +|E(B \cup v)| \\ & \leq & 2(|A| +1)-2 + 2(|B|+1)-2 \\ 
&=& 2|V|-2, 
\end{eqnarray*}
a contradiction.
This proves the first statement, the second is similar.
\end{proof}

The following is easy and similar to \cite[Lemma $2.5$]{B&J}. We omit the proof.

\begin{lem}\label{admlem5}
 Let $G=(V,E)$ be a circuit. Let $X \subset V$ be a critical set. 
Then $V \setminus X$ contains at least one node (in $G$).
\end{lem}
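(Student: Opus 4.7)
The plan is a short double-counting argument that exploits the circuit equation $|E|=2|V|-1$, the criticality of $X$, and the minimum degree bound coming from Lemma \ref{admlem9}.

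First I would verify the set-up: since $i(V)=|E|=2|V|-1 \neq 2|V|-2$, the set $V$ itself is not critical, so $X\subsetneq V$ and $V\setminus X$ is nonempty. Also $X\neq \emptyset$, as $i(\emptyset)=0\neq -2$. Partitioning the edges as $E=E(G[X])\sqcup E(G[V\setminus X])\sqcup E(X,V\setminus X)$ and using $i(X)=2|X|-2$ together with $|E|=2|V|-1$ gives the identity
\begin{equation*}
i(V\setminus X) + d(X,V\setminus X) = 2|V\setminus X| + 1.
\end{equation*}

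Next, I would argue by contradiction: assume $V\setminus X$ contains no node. By Lemma \ref{admlem9}(2), $G$ is $3$-edge-connected, hence has minimum degree at least $3$, so every vertex of $V\setminus X$ must actually have degree at least $4$ in $G$. Summing vertex degrees over $V\setminus X$ and using the handshake relation within this set,
\begin{equation*}
4|V\setminus X| \;\leq\; \sum_{v\in V\setminus X} d_G(v) \;=\; 2\,i(V\setminus X) + d(X,V\setminus X).
\end{equation*}
Substituting the identity above to eliminate $d(X,V\setminus X)$ yields $i(V\setminus X)\geq 2|V\setminus X|-1$.

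Finally, because $V\setminus X$ is a proper subset of $V$ (as $X\neq\emptyset$), the circuit sparsity bound forces $i(V\setminus X)\leq 2|V\setminus X|-2$, a contradiction. There is no real obstacle; the only subtlety is remembering to invoke $3$-edge-connectedness rather than just $2$-connectedness, since without ruling out degree $2$ vertices the inequality $\sum d(v)\geq 4|V\setminus X|$ would fail.
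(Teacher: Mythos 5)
Your proof is correct, and it is the standard double-counting argument that the paper has in mind when it says the result is ``entirely similar to [Lemma 2.5 of Berg--Jordan]'' and omits the proof. The identity $i(V\setminus X)+d(X,V\setminus X)=2|V\setminus X|+1$, the degree-sum bound $4|V\setminus X|\leq 2\,i(V\setminus X)+d(X,V\setminus X)$ under the no-node hypothesis, and the resulting violation of $(2,2)$-sparsity on the proper nonempty subset $V\setminus X$ all go through exactly as you state.
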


Our next lemma gives a criterion for admissibility.

\begin{lem}\label{admlem1}
 Let $G$ be a circuit, let $v$ be a node in $G$ with $N(v)=\{u,w,z\}$. Then $uv,wv$ is not 
admissible if and only if either (a) $uw \in E$ or (b) there is a critical set $X \subset V$ with $u,w \in X$ and $v,z \notin X$.
\end{lem}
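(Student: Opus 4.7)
The plan is to translate the non-admissibility of $uv,wv$ directly into failures of the two defining properties of a circuit for the graph $G_v^{uw}$, namely simplicity and $(2,2)$-sparsity on proper vertex-induced subgraphs. Since the edge count is automatic ($|E(G_v^{uw})|=|E|-3+1=2|V|-3=2|V(G_v^{uw})|-1$), the only obstructions to $G_v^{uw}$ being a circuit are (i) non-simplicity, which is precisely $uw\in E$, and (ii) the existence of some $X\subsetneq V\setminus\{v\}$ with $i_{G_v^{uw}}(X)\geq 2|X|-1$.

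Next I would compute $i_{G_v^{uw}}(X)$ in terms of $i_G(X)$: since $v\notin X$, only the new edge $uw$ can change the count, so $i_{G_v^{uw}}(X)=i_G(X)+1$ when $\{u,w\}\subseteq X$ and $i_{G_v^{uw}}(X)=i_G(X)$ otherwise. Because $G$ is a circuit and $X\subsetneq V$, we already have $i_G(X)\leq 2|X|-2$. So the "not both in $X$" case never yields a violator, and a violator exists precisely when there is an $X\subsetneq V\setminus\{v\}$ with $\{u,w\}\subseteq X$ and $i_G(X)=2|X|-2$, i.e., $X$ is critical in $G$.

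The step requiring a little care is promoting the condition $X\subsetneq V\setminus\{v\}$ to the cleaner statement $v,z\notin X$ appearing in the lemma. If $z\in X$ as well, then all three neighbours of $v$ lie in $X$, and
\[
i_G(X\cup\{v\})=i_G(X)+3=2|X|+1=2|X\cup\{v\}|-1.
\]
Since $G$ is a circuit, the only subset with this excess edge count is $V$ itself, forcing $X\cup\{v\}=V$ and hence $X=V\setminus\{v\}$, contradicting $X\subsetneq V\setminus\{v\}$. Therefore $z\notin X$. Conversely, any critical $X\subsetneq V$ with $u,w\in X$ and $v,z\notin X$ automatically satisfies $X\subseteq V\setminus\{v,z\}\subsetneq V\setminus\{v\}$, so it is a genuine witness.

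Combining these, $uv,wv$ fails to be admissible exactly when $uw\in E$ or a critical $X$ of the stated type exists, giving the biconditional. The only real obstacle is the bookkeeping around $z$; the rest is a direct unpacking of the definition of a circuit together with the elementary identity $i_{G_v^{uw}}(X)=i_G(X)+\mathbf{1}_{\{u,w\}\subseteq X}$.
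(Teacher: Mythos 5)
Your proposal is correct and follows essentially the same route as the paper: identify the edge count of $G_v^{uw}$ as automatically correct, split the failure modes into non-simplicity ($uw\in E$) and a sparsity violation on a proper $X$, translate the latter into a critical set of $G$ containing $u,w$, and rule out $z\in X$ via the count $i_G(X\cup\{v\})=2|X\cup\{v\}|-1$. Your version just spells out the identity $i_{G_v^{uw}}(X)=i_G(X)+\mathbf{1}_{\{u,w\}\subseteq X}$ and the "$X\cup\{v\}=V$" dichotomy a bit more explicitly than the paper does, but the content is identical.
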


\begin{proof}
Suppose first that (b) holds. Then the inverse Henneberg
$2$ move creates a new edge $uw$ implying $i(X)=2|X|-1$ and $X \subsetneq V$.
Also if (a) holds then $G_v^{uw}$ is not a simple graph.

Conversely, if $uv,wv$ is not admissible and (a) fails there is $X \subset V(G_v^{uw})$ such that $G[X]$ is not $(2,2)$-sparse. Then 
$|E(X)| \geq 2|X|-1$. It follows that $X$ is critical in $G$ and $u,w \in X$. If $z \in X$ then
$|E(X \cup v)|=|E(X)|+3 = 2|X|-2+3= 2|X \cup v|-1$, 
a contradiction. Thus $z \notin X$.
\end{proof}

Condition (b) in Lemma \ref{admlem1} leads us to strengthen the definition of critical as follows.
Let $G=(V,E)$ be a circuit. For a node $v \in V$ with $N(v)=\{u,w,z \}$ we say that a critical 
set $X$ is \emph{$v$-critical} if $u,w \in X$ and $v,z \notin X$. 
If $z$ is a node and such an $X$ exists then an inverse Henneberg $2$ move on $uv,wv$ is not admissible. Here 
$V \setminus \{v,z\}$ is a \emph{trivial} $v$-critical set on $u$ and $w$. If $X$ is a $v$-critical set on $u$ and $w$ for some node $v$ with $N(v)=\{u,w,z\}$ and $d_G(z) \geq 4$ then $X$ is \emph{node-critical}.  We will return to node-critical sets in Subsection \ref{subsec:guarantee}.

\subsection{Preserving Simplicity}

Condition (a) in Lemma \ref{admlem1} is crucial in separating the problem at hand from the analogue in \cite{B&J}. 
The following Proposition is the key step in bridging this difficulty.

\begin{prop}\label{prop:k4fix}
Let $G=(V,E)$ be a $3$-connected circuit with no non-trivial $3$-edge-cutsets and $|V|\geq 6$. 
Let $X_1,\dots, X_n$ be critical sets and let $Y=V\setminus \bigcup_{i=1}^nX_i$. Suppose
\begin{enumerate}
\item $|Y|\geq 2$,
\item $\bigcup_{i=1}^m G[X_i]$ is disconnected, or
\item $X_1,\dots, X_n$ induce copies of $K_4$.
\end{enumerate}
Then $Y$ contains at least two nodes of $G$.
\end{prop}

\begin{proof}
We prove $1$ and $2$ simultaneously.
With vertices labelled $v_1,\dots,v_{|V|}$, since $|E|=2|V|-1$ we have 
\[ \sum_{i=1}^{|V|}(4-d_G(v_i))=2.\]
Let $Z_1,\dots,Z_m$ be the connected components in $\bigcup_{i=1}^m G[X_i]$. 
In cases 1 and 2 Lemma \ref{admlem2} implies  $X_i \cup X_j$ is critical and $d(X_i,X_j)=0$ or $X_i \cap X_j=\emptyset$ for each 
$1 \leq i<j\leq n$.
%
Now $i(Z_j)=2|Z_j|-2$ for each $j$. Thus
\[ \sum_{i=1}^{|Z_j|}(4-d_{G[Z_j]}(v_i))=4.\]
By assumption 1 or 2 $|V\setminus Z_j|\geq 2$ so there are at least $4$ edges of the form $xy$ with $x \in Z_j$, 
$y \in V\setminus Z_j$. This implies 
\[ \sum_{i=1}^{|Z_j|}(4-d_G(u_i))\leq 0\] 
(with the vertices in $Z_j$ labelled $u_1,\dots,u_{|Z_j|}$) for each $j$. Thus
\[ \sum_{j=1}^m(\sum_{i=1}^{|Z_j|}(4-d_G(u_i)))\leq 0. \]
Since the minimum degree in $G$ is $3$ comparing this with the first summation implies $Y$ contains at least two nodes.

For $3$ assume $X_1,\dots, X_n$ induce copies of $K_4$ and suppose $m=1$ and $|Y|<2$. 
Let $|Y|=1$ then $Z_1$ is critical, $G[Z_1]$ is connected 
and every edge in $G[Z_1]$ is in a copy of $K_4$. 
Since every $A \subsetneq X_i$ with $|A|>1$ satisfies $i(A)\leq 2|A|-3$ we must have $X_1 \cap X_i=a$ for some $i$.
If $a$ is a cut-vertex in $G[Z_1]$ then we guarantee a cutpair in $G$ which contradicts our assumptions so $m>1$.
However if $a$ is not a cut-vertex there is a path in $G[Z_1]$ from any vertex in $X_1\setminus a$ to any vertex in $X_i \setminus a$. 
Since $d(X_1,X_i)=0$ the only way this may happen is if there is a set containing some $y_1 \in X_1\setminus a$ and some 
$y_k \in X_i \setminus a$ which is not contained in $X_1\cup X_i$. Let the path use vertices $y_1,y_2,\dots,y_k$ for some $k\geq 2$
and choose $X'$ to be the union of all $X_j$'s containing some $y_j$ except $X_1$ and $X_i$. Then $X'$ is critical.
As $X_1\cup X_i$ is critical this implies that $i(X'\cup X_1\cup X_i)>2|X'\cup X_1\cup X_i|-2$. Thus $a$ must be a cut-vertex.

A similar argument applies when $Y=0$; here $Z_1=V$ and there is exactly one edge $e$ not in a copy of $K_4$. As above we find $a$ is a 
cut-vertex for $G\setminus e$ and hence a cut-pair exists in $G$.
Therefore $m\geq 2$ and the result follows from $2$.
\end{proof}

Let $V_3=\{v \in V: v \mbox{ is a node}\}$.
Let $V_3^* \subset V_3$ be the subgraph of nodes which are not contained in copies of $K_4$ (in $G$). 
Following \cite{B&J} we call a node $v$ with $d_{G[V_3^*]}(v)\leq 1$ a \emph{leaf node}, with $d_{G[V_3^*]}(v)=2$ a \emph{series node} and with
 $d_{G[V_3^*]}(v)=3$ a \emph{branching node}.
From Proposition \ref{prop:k4fix} we can derive an analogue of \cite[Lemma $2.1$]{B&J}.

\begin{lem}\label{nok4forest}
Let $G=(V,E)$ be a $3$-connected circuit with $|V|\geq 6$ and no non-trivial $3$-edge cutsets. Then $G[V_3^*]$ is a forest on at least two vertices.
\end{lem}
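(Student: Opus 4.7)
My plan is to prove the two assertions of the lemma separately: that $|V_3^*|\ge 2$, and that $G[V_3^*]$ is acyclic.

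For the vertex-count bound I would first dispose of the case that $G$ contains no copy of $K_4$, in which case $V_3^*=V_3$. The identity
\begin{equation*}
\sum_{v\in V}(4-d_G(v)) \;=\; 4|V|-2|E| \;=\; 2,
\end{equation*}
combined with $d_G(v)\ge 3$ for every $v$ (coming from $3$-connectedness), forces at least two vertices of degree $3$. If instead $G$ does contain some copy of $K_4$, I would take $X_1,\ldots,X_n$ to be the vertex sets of \emph{all} copies of $K_4$ in $G$, noting that each $X_i$ is critical since $|E(K_4)|=6=2|X_i|-2$. Lemma \ref{k4fix}(3) then supplies at least two nodes of $G$ in $Y=V\setminus\bigcup_i X_i$, and every such node, lying outside every copy of $K_4$, lies in $V_3^*$.

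For the forest property I would argue by contradiction. Suppose $G[V_3^*]$ contains a cycle, and let $C=v_1v_2\cdots v_kv_1$ be a shortest such cycle, with $U=V(C)$. The key preliminary observation is that $C$ is then induced in $G$ itself: both endpoints of any chord of $C$ lie in $V_3^*$, so such a chord would sit inside $G[V_3^*]$ and shorten $C$, contradicting minimality. Since every $v_i\in V_3$ has $d_G(v_i)=3$ and uses two of its edges on the cycle, each $v_i$ has exactly one neighbour in $V\setminus U$; hence $i(U)=k$ and $d(U,V\setminus U)=k$. Also $U\ne V$, for otherwise $G$ would coincide with $C$ and every vertex would have degree $2$.

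The contradiction now falls out of a single edge count. Setting $W=V\setminus U$,
\begin{equation*}
i(W) \;=\; |E|-i(U)-d(U,W) \;=\; (2|V|-1)-2k \;=\; 2|W|-1,
\end{equation*}
but $W\subsetneq V$, so $G[W]$ is a proper subgraph of the circuit $G$ and must satisfy $i(W)\le 2|W|-2$, a contradiction. The only real subtlety is verifying that a shortest cycle in $G[V_3^*]$ is genuinely chordless in $G$, which uses crucially that the cycle vertices belong to $V_3^*$ rather than merely to $V_3$; once that is established, no case analysis on $k$ is required, and the hypotheses on $3$-edge cutsets and $|V|\ge 6$ enter the argument only through their use in Lemma \ref{k4fix} for the first part.
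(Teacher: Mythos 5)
Your proof is correct and follows essentially the same route as the paper: it gets $|V_3^*|\ge 2$ via Lemma~\ref{k4fix}(3) (the paper cites part~3 directly; your case split on whether a $K_4$ exists is an equivalent, slightly more explicit version of the same appeal), and it derives the forest property from the edge count $i(\bar C)=2|\bar C|-1$ applied to the complement of a cycle in $G[V_3^*]$. The one place you add real value is the explicit justification that a shortest cycle $C$ in $G[V_3^*]$ is chordless in $G$ — the paper just writes ``Suppose $C\subset V_3^*$ induces a cycle'' and tacitly reads this as $G[C]$ being exactly a cycle; your observation that any chord of $C$ has both ends in $V_3^*$ and so would shorten $C$ inside $G[V_3^*]$ is precisely what makes $i(C)=|C|$ and $d(C,\bar C)=|C|$ legitimate. (Minor remark: your ``$U\ne V$'' reasoning implicitly uses chordlessness to say $G$ coincides with $C$; more robustly, if $U=V$ then every vertex has degree~$3$ so $|E|=3|V|/2$, incompatible with $|E|=2|V|-1$ for $|V|\ge 6$. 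Also, the paper separately rules out $|\bar C|=1$ ``since $G$ is not a wheel,'' but as you note this is already absorbed by the sparsity contradiction, so your omission there is harmless.)
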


\begin{proof}
By Proposition \ref{prop:k4fix} part $3$ $|V_3^*|\geq 2$.
Suppose $C \subset V_3^*$ induces a cycle. $G$ is not a cycle so $\bar{C} :=V \setminus C \neq \emptyset$. 
$|\bar{C}| >1$ since $G$ is not a wheel. Now
\begin{eqnarray*}
i(\bar{C}) &=& 2|V|-1-i(C)-d(C,\bar{C})= 2|V|-1-|C|-|C|\\ &=& 2(|V|-|C|)-1= 2|\bar{C}|-1, 
\end{eqnarray*}
a contradiction.
\end{proof}

It is much easier to deal with the case when the neighbour set of a node neither induces $K_3$ or induces a graph with no edges.

\begin{lem}\label{star}
Let $G=(V,E)$ be a circuit containing a node $v$ with $N(v)=\{w,u,z\}$. Suppose that either
\begin{enumerate}
\item $G$ is $3$-connected, $uz \notin E$ and $wz,wu \in E$ or
\item $uz, wu \notin E$ and $wz \in E$.
\end{enumerate}
Then $v$ is admissible.  
\end{lem}

\begin{proof}
Since $z,u$ is not a cutpair, $d_G(w)\geq 4$. Let $t \in N(w)$ and suppose $v$ is not admissible. By Lemma \ref{admlem1} there exists
a proper critical subset $X_{zu} \subset V$ containing $z,u$ but not $w,v$. If $t \in X_{zu}$ then $i(X_{zu}\cup w)=2|X_{zu}\cup w|-1$,
a contradiction as $v \notin X_{zu}\cup w$. If $t \notin X_{zu}$ then $X_{zu}\cup w$ is critical and 
$i(X_{zu}\cup w \cup v)=2|X_{zu}\cup w \cup v|-1$, a contradiction as $t \notin X_{zu}\cup w \cup v$. This proves 1.

Now assume for a contradiction that $v$ is not admissible. By Lemma \ref{admlem1} there exists proper 
critical sets $X_{wu},X_{uz} \subset V$. Note $d_G(z)\geq 4$ since $|N(z)\cap X_{uz}|\geq 2$ and similarly $d_G(w)\geq 4$.
By Lemma \ref{admlem2} $X_{wu}\cup X_{uz}$ is critical so adding $wz$ then $v$ plus its 
three edges gives a contradiction. Thus at most one of the critical sets $X_{wu}$ and $X_{uz}$ can exist and 2 follows. 
\end{proof}

\subsection{Guaranteeing an admissible node}
\label{subsec:guarantee}

Now we have reduced the problem to the case when the 3 neighbours of a node induce a null graph. For this we modify results from \cite{B&J}.

\begin{lem}\label{admlem8}
Let $G=(V,E)$ be a circuit with $|V| \geq 6$. Suppose $v$ is a non-admissible node of $G$ with $N(v)=\{x,y,z\}$ and none of $xy,xz,yz$ 
present in $E$. Then there exists two $v$-critical sets $X,Y$ such that $X \cup Y = V \setminus v$. Moreover we may choose $X,Y$ such 
that $z \in X \cap Y$.
\end{lem}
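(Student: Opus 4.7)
The plan is to apply Lemma~\ref{admlem1} three times to obtain critical sets certifying the non-admissibility of each pair of neighbours of $v$, then combine two of these via Lemma~\ref{admlem2}, and finish with a short edge-count.

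First, I would invoke Lemma~\ref{admlem1} for each of the three pairs among $\{x,y,z\}$. Since $v$ is non-admissible and no pair of neighbours is adjacent in $G$, every pair must be blocked by a critical set, yielding $v$-critical sets $X_{xy}$, $X_{xz}$, $X_{yz}$, where $X_{ab}$ contains $\{a,b\}$ and avoids both $v$ and the third neighbour. To arrange the ``moreover'' clause directly I would set $X := X_{xz}$ and $Y := X_{yz}$, so that $z \in X \cap Y$. Both sets avoid $v$, so $|X \cup Y| \leq |V|-1$ and $|X \cap Y| \geq 1$, and Lemma~\ref{admlem2} gives that $X \cup Y$ is critical.

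The only real step is to show $X \cup Y = V \setminus v$. Suppose, for contradiction, that some $w \in V \setminus v$ lies outside $X \cup Y$, and set $W := (X \cup Y) \cup \{v\}$. Because $N(v) \subseteq X \cup Y$, adding $v$ contributes exactly its three incident edges, so
\[
i(W) \;=\; i(X \cup Y) + 3 \;=\; 2|X \cup Y| + 1 \;=\; 2|W| - 1,
\]
with $W \subsetneq V$ since $w \notin W$. This contradicts the sparsity bound $i(W) \leq 2|W| - 2$ that holds on every proper vertex subset of a circuit, so the union must exhaust $V \setminus v$.

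The main subtlety, modest as it is, is choosing which two of the three sets $X_{xy}, X_{xz}, X_{yz}$ to combine; taking the pair $X_{xz}, X_{yz}$ that share $z$ makes the ``moreover'' assertion immediate, and by symmetry in the three pairs the same argument covers the bare existence statement (any two of the three sets work, but the intersection specifies which neighbour ends up common). No use of the bound $|V| \geq 6$ is needed beyond the standing hypothesis that $G$ is a circuit, since the edge-count above forces the full union on its own.
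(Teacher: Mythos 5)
Your proof is correct and follows essentially the same route as the paper: invoke Lemma~\ref{admlem1} three times to produce the three $v$-critical sets, pick the two sharing $z$, combine them with Lemma~\ref{admlem2}, and conclude $X\cup Y = V\setminus v$ by the sparsity count on $(X\cup Y)\cup\{v\}$. The only difference is that you spell out the final edge-count explicitly, which the paper leaves implicit in the phrase ``and hence $X\cup Y = V\setminus v$, since $x,y,z\in X\cup Y$.''
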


\begin{proof}
Since $v$ is non-admissible Lemma \ref{admlem1} implies there exist critical sets $X$ on $y,z$, $Y$ on $x,z$ and $Z$ on $x,y$. From 
Lemma \ref{admlem2} we deduce that $X\cup Y$ is critical and hence $X\cup Y=V\setminus v$, since $x,y,z \in X\cup Y$.
\end{proof}

The next lemma, an analogue of \cite[Lemma $3.3$]{B&J} gives a crucial structural result about 3-connected circuits with no non-trivial 3-edge-cutsets containing non-admissible nodes. Figure \ref{nonadleaf} illustrates this; see also the first graph in Figure \ref{fig:example} for an example of a non-admissible series node.

\vspace{-.5cm}

\begin{center}
\begin{figure}[ht]
\centering
\includegraphics[width=5cm]{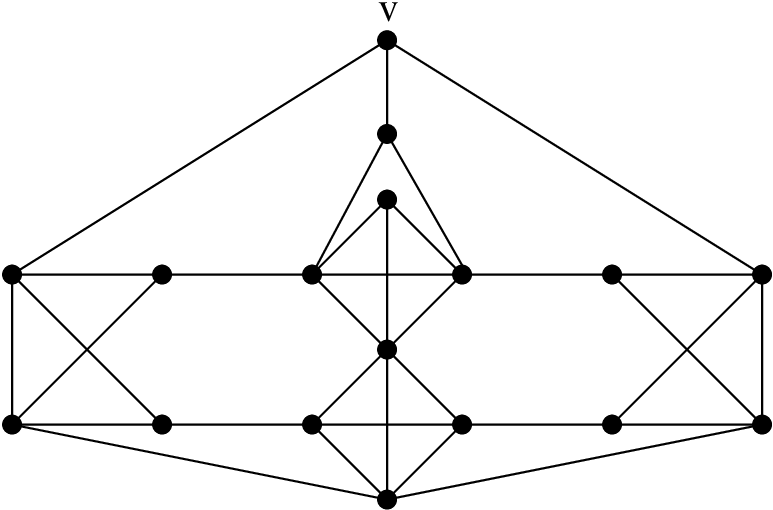}
\caption{A $3$-connected circuit with no non-trivial $3$-edge-cutsets. $v$ is a non-admissible leaf node.}
\label{nonadleaf}
\end{figure}
\end{center}

\vspace{-1cm}

\begin{lem}\label{lem3.312}
Let $G=(V,E)$ be a $3$-connected circuit with no non-trivial $3$-edge-cutsets. Let $v \in V$ be a node with $N(v)=\{x,y,z\}$, 
$d_G(z) \geq 4$ and suppose no pair of neighbours of $v$ defines an edge. Let $X$ be a $v$-critical set on $x,y$. Furthermore suppose that 
either
\begin{enumerate}
\item there is a non-admissible series node $u \in V \setminus X \setminus v$ with no edges between its neighbours, precisely one neighbour
$w$ in $X$ and $w$ is a node, or
\item there is a non-admissible leaf node $t \in V \setminus X \setminus v$ with no edges between its neighbours.
\end{enumerate}
Then there is a node-critical set $X'$ in $G$ with $|X'| >|X|$ and $(X \cap V_3^*) \subseteq (X' \cap V_3^*)$.
\end{lem}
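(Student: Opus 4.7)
The plan is to construct $X'$ as the union of $X$ with a critical set produced by the non-admissibility of the auxiliary node ($u$ in case $1$, $t$ in case $2$), and to apply Lemma \ref{admlem2} to ensure the union is critical. A preliminary observation pins down the needed degrees: because the auxiliary node has no edges between its three neighbours, none of its neighbours can be a node lying in a $K_4$ together with it, since such a $K_4$ would place an edge between two other neighbours of the auxiliary node. Hence every degree-$3$ neighbour of $u$ or $t$ lies in $V_3^*$. Combined with $u$ being a series node and $t$ a leaf node in $G[V_3^*]$, this forces $u$'s third neighbour to have degree $\geq 4$, and at least two of $t$'s three neighbours to have degree $\geq 4$.

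In case $1$, write $N(u) = \{w, a, b\}$ with $w, a$ the two $V_3^*$-neighbours of $u$, so $d(b) \geq 4$. Non-admissibility of $u$ on $\{w, a\}$ together with $wa \notin E$ gives, via Lemma \ref{admlem1}, a critical set $A$ with $\{w, a\} \subseteq A$ and $\{u, b\} \cap A = \emptyset$. Set $X' := X \cup A$. Since $w \in X \cap A$ and $u \notin X \cup A$, Lemma \ref{admlem2} gives $X'$ critical; since $a \in A \setminus X$ we have $|X'| > |X|$. Moreover $X'$ is $u$-critical on $\{w, a\}$ because $\{u, b\} \cap X' = \emptyset$, and since $d(b) \geq 4$ it is node-critical, with $X \cap V_3^* \subseteq X' \cap V_3^*$ immediate from $X \subseteq X'$.

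In case $2$, label $N(t) = \{p, q, r\}$ so that any degree-$3$ neighbour of $t$ is $p$, giving $d(q), d(r) \geq 4$. Apply Lemma \ref{admlem8} with $p$ placed in the common intersection to obtain $t$-critical sets $T_{pq}, T_{pr}$ with union $V \setminus t$; each is already node-critical, as its excluded third neighbour has degree $\geq 4$. Since $v \in (V \setminus t) \setminus X$, at least one $T_\star$ fails to be contained in $X$; and since $p \in T_{pq} \cap T_{pr}$, at least one also meets $X$. Taking $X' := X \cup T_\star$ for such a $T_\star$, Lemma \ref{admlem2} gives $X'$ critical and strictly larger than $X$, with $X \cap V_3^* \subseteq X' \cap V_3^*$.

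The main obstacle is confirming node-criticality of $X'$ in case $2$. If the excluded third neighbour of $t$ still lies outside $X'$, then $X'$ is $t$-critical as required. Otherwise $X'$ contains all three neighbours of $t$, and I would fall back on $v$ as the node witness: since $\{x, y\} \subseteq X \subseteq X'$ and $v \notin X'$, it suffices to check $z \notin X'$, which makes $X'$ a $v$-critical set and hence node-critical via $d(z) \geq 4$. This follows by a sparsity argument: $z \in X'$ would place all three neighbours of $v$ into $X'$, giving $i(X' \cup \{v\}) = 2|X' \cup \{v\}| - 1$ and hence $X' \cup \{v\} = V$, so $V \setminus X' = \{v\}$, contradicting $t \in V \setminus X'$ with $t \neq v$.
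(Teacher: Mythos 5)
Your Case~1 is correct and follows the paper's argument closely; the preliminary degree observation is sound (a degree-$3$ vertex lying in a $K_4$ must have all its neighbours in that $K_4$, so the ``no edges between neighbours of $u$'' hypothesis does force every degree-$3$ neighbour of $u$ or $t$ into $V_3^*$).

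Case~2, however, has a genuine gap. First, a smaller issue: ``since $p \in T_{pq} \cap T_{pr}$, at least one also meets $X$'' is not a valid inference, since $p$ need not lie in $X$; the true reason is $x \in X \subseteq V \setminus t = T_{pq}\cup T_{pr}$. More seriously, you never exhibit a single $T_\star$ satisfying both ``meets $X$'' and ``not contained in $X$,'' and the fallback is broken: it assumes $v \notin X'$, but if the excluded neighbour of $t$ lies in $X'$ then $N(t)\subseteq X'$, $t\notin X'$, and $X'$ critical force $i(X'\cup t)=2|X'\cup t|-1$, hence $X'\cup t=V$, i.e.\ $X' = V\setminus t$. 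In particular $v \in X'$, so your sparsity argument cannot fire, and moreover $V\setminus t$ is not node-critical at all (the only vertex outside it is $t$, whose three neighbours are all inside). The paper sidesteps this by first establishing $d(t,X)\le 1$ (if $d(t,X)=2$ then $X\cup t$ is already a $v$-critical set of the required kind, and $d(t,X)=3$ contradicts $v\notin X\cup t$), and then choosing between $Y_1$ and $Y_2$ so that the one used \emph{contains} the unique vertex $s\in N(t)\cap X$; this guarantees the excluded third neighbour of $t$ is outside $X'$, so $X'$ is $t$-critical and the fallback is never needed. Your Case~2 needs that $d(t,X)\le 1$ reduction and a choice of $T_\star$ that absorbs $s$.
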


\begin{proof}
First let $u \in V \setminus X \setminus v$ be a non-admissible series node with $N(u)=\{w,p,n\}$
and $d_G(w)=3$. We may assume $d_G(p)=3$ and $d_G(n)\geq4$.
Since $u$ is non-admissible and $wp \notin E$ there exists a $u$-critical set $Y$ on $w$ and $p$ by Lemma \ref{admlem1}. By Lemma
\ref{nok4forest} $G[V_3^*]$ contains no cycles. Note $|Y| \geq 5$ since $p,w$ are not in a copy of $K_4$.
Now $X\cap Y$ contains $w$ so $X' :=X \cup Y \subseteq V \setminus u \setminus v$ is node-critical on $u$ by Lemma 
\ref{admlem2}. Also $p \notin X$ and $d_G(n)\geq 4$ so $|X'| \geq |X|$ and $(X \cap V_3^*)\subseteq (X'\cap V_3^*)$.

For the second part of the lemma let $t$ be a non-admissible leaf node. Lemma \ref{admlem8} implies that there exist two $t$-critical 
sets $Y_1$ and $Y_2$ with $Y_1 \cup Y_2=V \setminus t$ and if $t$ has a neighbour $r$ which
is a node then we can also assume $r \in Y_1 \cap Y_2$. Note that $Y_1$ and $Y_2$ are node-critical and 
$|Y_1|, |Y_2| \geq 5$.

Now $x,y \in Y_1 \cup Y_2$ and Lemma \ref{admlem2} implies that $d(Y_1 \setminus Y_2,Y_2 \setminus Y_1)=0$.
Since also $Y_1 \cup Y_2= V \setminus t$ and 
$t \notin X$ we know that $|X \cap Y_1|\geq 1$ or $|X \cap Y_2|\geq 1$. Without loss of generality assume
$|X \cap Y_1|\geq 1$.
$d(t,X)=3$ implies $i(X\cup t)>2|X\cup t|-2$ so $d(t,X) \leq 2$. Moreover $d(t,X) \leq 1$ as if it 
were equal to $2$ then $X\cup t$ is critical and the result follows.

Now $|N(t)\cap X| \leq 1$. 
First suppose $|N(t)\cap X|=0$. Lemma \ref{admlem2} implies that $X \cup Y_1$ is $t$-critical. Thus choosing
$X'=X \cup Y_1$ completes the proof in this case.
Now suppose $N(t)\cap X = \{s\}$. If $s \in Y_1$ then $N(t) \setminus (X \cup Y_1) \neq \emptyset$ (as $N(t)\nsubseteq Y_1$) and hence
$X'=X \cup Y_1$ is node-critical and we are done.
If $d_G(s)=3$ then $s \in Y_1 \cap Y_2$ so we may assume $d_G(s)\geq 4$ and $s \notin Y_1$. Since 
$Y_1 \cup Y_2=V\setminus t$ this gives $s \in Y_2$. $|X \cap Y_2|\geq 1$ so choose $X'=X \cup Y_2$ to 
complete the proof.
\end{proof}

Similarly to \cite[Lemmas $3.5$ and $3.6$]{B&J} we have the following two lemmas.

\begin{lem}\label{admlem3}
Let $G$ be a $3$-connected circuit with no non-trivial $3$-edge-cutsets and $|V| \geq 6$. Let 
$\X=\{X \subset V:X \mbox{ is a node-critical set in } G\}$. If $\X=\emptyset$ then $G$ has two admissible nodes.
\end{lem}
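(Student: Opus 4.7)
The plan is to show that every non-branching node in $V_3^*$ is admissible; since by Lemma \ref{nok4forest} $G[V_3^*]$ is a forest on at least two vertices, any such forest has at least two vertices of degree at most $1$ (leaf nodes), and these will be the two admissible nodes required.

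To prove the key claim, I fix a non-admissible $v\in V_3^*$ with $N(v)=\{x,y,z\}$ and first argue $N(v)$ is independent. Since $v\in V_3^*$, the set $\{v,x,y,z\}$ is not a $K_4$, so at most two of $xy,xz,yz$ can be edges; any two such edges share a vertex (there are only three to choose from), so Lemma \ref{star} would force $v$ admissible if two of them are present, and Lemma \ref{oneedgeleaf} does the same if exactly one is present. Hence no edge among $N(v)$ exists, and Lemma \ref{admlem1} produces a $v$-critical set on each of the three pairs $\{y,z\}$, $\{x,z\}$, $\{x,y\}$.

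Next I invoke the hypothesis $\X=\emptyset$: none of these three $v$-critical sets is node-critical, and by the definition of node-criticality that forces the corresponding third neighbour to have degree exactly $3$. Cycling through the three pairs gives $d(x)=d(y)=d(z)=3$. Moreover, any $K_4$ containing a degree-$3$ vertex must consist of that vertex together with all of its neighbours, so placing any of $x,y,z$ into a $K_4$ would drag $v$ into that $K_4$, contradicting $v\in V_3^*$. Therefore $\{x,y,z\}\subseteq V_3^*$, so $v$ has three neighbours in $V_3^*$ and is a branching node. Contrapositively, every non-branching node in $V_3^*$ is admissible.

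The main obstacle is the degree deduction $d(x)=d(y)=d(z)=3$: this is the only place the hypothesis $\X=\emptyset$ is used, and it is what converts the purely local statement of non-admissibility of $v$ into the rigid global structural fact that all three neighbours of $v$ are nodes lying outside every $K_4$. Everything else is routine bookkeeping, together with the standard observation that a forest on at least two vertices has at least two leaves.
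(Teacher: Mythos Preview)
Your proof is correct and is essentially the same argument the paper has in mind, just written out in full detail. The paper's proof is a two-line sketch (``$|V_3^*|\geq 2$ by Lemma~\ref{nok4forest}; since $\X=\emptyset$ the result follows from Lemmas~\ref{admlem1}, \ref{star} and \ref{oneedgeleaf}''), and the remark immediately following the lemma confirms that the intended mechanism is exactly your contrapositive --- that $\X=\emptyset$ forces every leaf and series node of $V_3^*$ to be admissible --- so your expansion matches the paper's approach.
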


\begin{proof}
By Lemma \ref{nok4forest} $|V_3^*|\geq 2$.
Since $\X=\emptyset$ the result follows from Lemmas \ref{admlem1} and \ref{star}. 
\end{proof}


\begin{lem}\label{admlem99}
Let $G$ be a $3$-connected circuit with no non-trivial $3$-edge-cutsets and $|V| \geq 6$. Suppose $v$ is an admissible node. Let 
$\Y=\{Y \subset V: v\in Y,\mbox{ } Y \mbox{ is a node-critical set in } G\}$. If $\Y=\emptyset$ then $G$ has two admissible nodes.
\end{lem}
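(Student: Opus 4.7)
I would argue by contradiction. Assume $v$ is the unique admissible node of $G$, so every other node is non-admissible. The goal is to produce a node-critical set containing $v$, contradicting $\Y = \emptyset$.

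First, if $\X = \emptyset$, Lemma \ref{admlem3} yields two admissible nodes, a contradiction. So $\X \neq \emptyset$; fix $X \in \X$ of maximum cardinality. By $\Y = \emptyset$, $v \notin X$. Write $X$ as node-critical on a node $w$ with $N(w) = \{a, b, c\}$, $a, b \in X$, $w, c \notin X$, $d(c) \geq 4$. Lemmas \ref{star} and \ref{oneedgeleaf} force any non-admissible node that lies in $V_3^*$ to have no edges among its neighbors; by assumption this applies to every node of $G$ other than $v$.

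The main case is $w \neq v$ and $w \in V_3^*$: here $w$ satisfies the hypothesis of Lemma \ref{lem3.312} (reading the ``$v$'' of that lemma as $w$). I would locate a non-admissible series or leaf node $t \in V \setminus X \setminus w$ meeting clause (1) or (2) of Lemma \ref{lem3.312}: Lemma \ref{k4fix} applied to $X$ provides at least two nodes in $V \setminus X$, a short edge-count using $d(c) \geq 4$ improves this to at least three vertices in $V \setminus X$, and the forest structure from Lemma \ref{nok4forest} then supplies a suitable $t$ among non-admissible nodes in $V \setminus X \setminus \{v, w\}$. Lemma \ref{lem3.312} now returns $X \subsetneq X' \in \X$, contradicting maximality of $X$.

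The remaining subcases ($w = v$, or $w \in V_3 \setminus V_3^*$ so $w$ lies in a copy of $K_4$) are handled by choosing a non-admissible node $u \in V_3^* \setminus \{v\}$ (which exists by Lemma \ref{nok4forest}) and applying Lemma \ref{admlem8} to obtain $u$-critical sets $Y_1, Y_2$ with $Y_1 \cup Y_2 = V \setminus u$. Since $v \in V \setminus u$, some $Y_i$ contains $v$; because $\Y = \emptyset$ this $Y_i$ cannot be node-critical and must therefore be trivial, sharply constraining $u$'s local structure (forcing a node as the third neighbor of $u$). I then combine $X$ with the other set $Y_{i'}$ (which avoids $v$) via Lemma \ref{admlem2}, after verifying $X \cap Y_{i'} \neq \emptyset$ and $X \cup Y_{i'} \neq V$, to build a strictly larger node-critical $X' \supsetneq X$ still not containing $v$, contradicting maximality. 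In the subcase $w = v$, the admissibility-witnessing pair $\{p, q\} \subset N(v)$ (distinct from $\{a, b\}$ by admissibility of $v$) is the essential tool ensuring $v \notin X \cup Y_{i'}$. The main obstacle is precisely this subcase $w = v$: the maximal node-critical set is centered at the admissible node itself, and the argument must delicately exploit the pair $\{p, q\}$ together with $\Y = \emptyset$ and Lemma \ref{admlem2} to produce the required extension without dragging $v$ into it.
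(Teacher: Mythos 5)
The paper's proof is far shorter and takes a completely different, more direct route. After observing via Lemma~\ref{nok4forest} that $G[V_3^*]$ is a forest on at least two vertices, the paper simply selects a leaf $w\neq v$ of $G[V_3^*]$ and shows $w$ is admissible: if $w$ had an edge among its neighbours, Lemma~\ref{star} or Lemma~\ref{oneedgeleaf} would already make it admissible; otherwise Lemma~\ref{admlem8} yields two $w$-critical sets $X,Y$ with $X\cup Y=V\setminus w$, and since $w$ is a leaf of $G[V_3^*]$ these sets can both be taken node-critical (the only possible node-neighbour of $w$ can be put in $X\cap Y$), so one of them contains $v$, contradicting $\Y=\emptyset$. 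There is no maximality argument and no invocation of Lemma~\ref{lem3.312} at all. Your proposal instead recreates the heavier machinery of Theorem~\ref{recursivethm2}, which is unnecessary here because the hypothesis $\Y=\emptyset$ is strong enough to collapse the argument to this single leaf node.

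Beyond being circuitous, your sketch has concrete gaps. In the ``main case'' you claim Lemma~\ref{k4fix} applied to $X$ alone produces a suitable $t\in V_3^*\setminus(X\cup\{v,w\})$; but Lemma~\ref{k4fix} with $n=1$ only supplies two \emph{nodes} in $V\setminus X$, not nodes outside copies of $K_4$ --- the proof of Theorem~\ref{recursivethm2} needs the multi-set version (parts 2 and 3) with all $K_4$-copies included among the $X_i$ to get a vertex of $V_3^*$, and you omit this. In the ``remaining subcases,'' you assert that the set $Y_{i'}$ complementary to the one containing $v$ ``avoids $v$,'' but Lemma~\ref{admlem8} gives $Y_1\cup Y_2=V\setminus u$ with a common vertex $z\in Y_1\cap Y_2$, and nothing prevents $v\in Y_1\cap Y_2$; when that happens your $Y_{i'}$ does not exist. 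You also do not justify that $X\cup Y_{i'}$ is \emph{node}-critical rather than merely critical (one needs an excluded node together with an excluded neighbour of degree at least four), nor do you handle the $w=v$ obstacle beyond gesturing at the pair $\{p,q\}$. All of these difficulties evaporate if you simply imitate the paper: pick a leaf $w\neq v$ in $G[V_3^*]$, apply Lemmas~\ref{star}, \ref{oneedgeleaf} and \ref{admlem8}, and let $\Y=\emptyset$ do the rest.
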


\begin{proof}
By Lemma \ref{nok4forest} $|V_3^*|\geq 2$.
Let $w\neq v$ be a leaf in $G[V_3^*]$ and suppose $w$ is non-admissible. Either this contradict Lemma \ref{star}
or by Lemma \ref{admlem8} there exist node-critical sets $X,Y$ with $X\cup Y=V\setminus w$, contradicting $\Y=\emptyset$.
\end{proof}

\subsection{Proof of Theorem \ref{recursivethm2}}

We are now ready to prove that any sufficiently connected circuit contains an admissible vertex.

\begin{proof}[Proof of Theorem \ref{recursivethm2}]
By Lemma \ref{nok4forest} $G[V_3^*]$ is a forest and $|V_3^*|\geq 2$.
By Lemma \ref{star} we need consider only the case when there are no edges between the neighbours of
every $a\in V_3^*$. 

Let $\X=\{X\subset V: X \mbox{ is a node-critical set in } G\}$. If $\X=\emptyset$ we 
are done by Lemma \ref{admlem3}. Otherwise let $X \in \X$ be maximal.
Choose $t \in N(v)$ such that $X$ is $v$-critical with $d_G(t)\geq 4$ and $t \notin X$. $X\cup v$ is critical and
$|V\setminus X\setminus v|\geq 2$, otherwise $i(X\cup v\cup t)>2|X\cup v\cup t|-1$.
By Lemma \ref{admlem5} $V\setminus X\setminus v$ contains a node. 

Let $X=X_n$ and let $X_1,\dots,X_{n-1}$ be critical sets in 
$G$ not contained in $X$ such that every copy of $K_4$ is induced by some 
$X_i$ and every $X_i$ induces a copy of $K_4$. Then there are two cases. If $t \notin X_i$ for all $i$ then 
$|Y|= |V \setminus \bigcup_{i=1}^n X_i|\geq 2$ so Proposition \ref{prop:k4fix} part $1$ implies there is a vertex not in $X\cup v$ which is a node 
not in a copy of $K_4$. Secondly if $t \in X_i$ for some $i$ then $|X\cap X_i|\leq 1$ otherwise $i(X\cup X_i)>2|X\cup X_i|-2$.
Moreover if $X\cap X_i = a$ then $X\cap X_i$ is critical so $d(X,X_i)=0$ and $X\cup X_i\cup v = V$ implying $a,v$ is a cut-pair for $G$.
Hence $|X\cap X_i|=0$ and $\bigcup_{i=1}^n G[X_i]$ is disconnected so Proposition \ref{prop:k4fix} part $2$ implies there is a vertex not in 
$X\cup v$ which is a node not in a copy of $K_4$.

Let $W^*:=V_3^*\cap (V\setminus X\setminus v)$.
$G[W^*]$ is a subforest of $G[V_3^*]$ on the vertex set $W^*$. By the preceding paragraph $|W^*|\geq 1$ so $W$ contains a leaf $u$. 
Each vertex $z \in V\setminus X\setminus v\setminus t$ has at most one neighbour in $X$; otherwise $X\cup z$ is node-critical, 
contradicting the maximality of $|X|$. Therefore $u$ is not a branching node of $G$.

Now if $u$ is a leaf node then Lemma \ref{lem3.312} part $2$ and the maximality of $|X|$ imply that $u$ is an admissible node. If 
$u$ is a series node in $G$ then, since $u$ has at most one neighbour in $X$
and since $u$ is a leaf in $G[W^*]$, it follows that it has precisely one neighbour $y$ in $X$ and $y$ is a node.
Thus Lemma \ref{lem3.312} part $1$ and the maximality of $|X|$ imply that $u$ is an admissible node.

Finally let $\Y=\{Y \subset V: u\in Y,\mbox{ } Y \mbox{ is a node-critical set in } G\}$. If $\Y=\emptyset$ the result follows from
Lemma \ref{admlem99}. Otherwise let $Y\in \Y$ be maximal, and argue similarly to the proof for $X\in \X$ to complete the proof. 
\end{proof}

\section{Joining Circuits}
\label{sumsec}

By Lemma \ref{admlem9} and Theorem \ref{recursivethm1} it remains to consider the generation of circuits with cutpairs or with non-trivial $3$-edge cutsets. 

\subsection{Circuits containing cut-pairs}

We start by considering graphs that
are not $3$-connected. Let $K_n(a_1,\dots,\break a_n)$ denote the complete graph with vertex set $\{a_1,\dots,a_n\}$.
Let $G=(V,E)$ be a circuit with a cutpair $a,b$ and a bipartition $A,B$ of $V\setminus \{a,b\}$. Since $f(G)=1$ and $f(H)\geq 2$ 
for all subgraphs there are two options: $ab \in E$ and $f(G[A\cup\{a,b\}])=f(G[B\cup \{a,b\}])=2$ or 
$ab \notin E$ and $3=f(G[A\cup\{a,b\}])<f(G[B\cup \{a,b\}])=2$. This leads us to the $1$- and $2$-join operations. To refresh the readers memory we define the inverse operations.

Let $G$ be as above and suppose
$f(G[A\cup\{a,b\}])< f(G[B\cup\{a,b\}])$. A \emph{$1$-separation} over the cutpair $a,b$ forms disjoint graphs 
$G[A\cup \{a,b\}]\cup ab$ and $G[B\cup \{a,b\}]\cup K_4(a,b,c,d)$ where $c,d \notin B\cup\{a,b\}$.
Also let $G=(V,E)$ be a circuit with a cutpair $a,b$ with a bipartition $A,B$ of $V\setminus \{a,b\}$ such that 
$f(G[A\cup\{a,b\}])= f(G[B\cup\{a,b\}])$. A \emph{$2$-separation} over the cutpair $a,b$ forms disjoint graphs 
$G[A\cup \{a,b\}]\cup K_4(a,b,c,d)$ and $G[B\cup \{a,b\}]\cup K_4(a,b,c,d)$ where $c,d \notin A\cup\{a,b\}$ or $B\cup\{a,b\}$.

\begin{lem}\label{1sum}
Let $G_1=(V_1,E_1)$, $G_2=(V_2,E_2)$ be graphs such that $G_1$ contains an edge $a_1b_1$ and $G_2$ contains a two vertex cut
$a_2,b_2$ within\break $K_4(a_2,b_2,c_2,d_2)$. Then the $1$-join $G_1 \oplus_1 G_2=G=(V,E)$ (merging $a_1=a_2$ into $a$ and $b_1=b_2$ into 
$b$) is a circuit if and only if $G_1$ and $G_2$ are circuits. 
\end{lem}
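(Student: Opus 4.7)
The plan is to verify the two defining properties of a circuit in $M^*(2,2)$, namely $|E|=2|V|-1$ and $|E_G(X)|\le 2|X|-2$ for every $X\subsetneq V$, together with simplicity of $G$. The vertex and edge counts are direct: identifying $a_1=a_2$ and $b_1=b_2$ and deleting $c_2,d_2$ gives $|V|=|V_1|+|V_2|-4$; the only edges of $G_2$ incident to $\{c_2,d_2\}$ are the five edges of the $K_4(a_2,b_2,c_2,d_2)$ other than $a_2b_2$ (using that $\{a_2,b_2\}$ is a $2$-cut), so the deletions are $a_1b_1$ from $G_1$ and $a_2b_2$ together with these five edges from $G_2$, giving $|E|=|E_1|+|E_2|-7=2|V|-1$. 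Simplicity is immediate, since the only identified vertices are $a$ and $b$ and both $a_1b_1$ and $a_2b_2$ have been removed.

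For sparsity, let $X\subsetneq V$. I will lift $X$ to subsets $X_1\subseteq V_1$ and $X_2^-\subseteq V_2\setminus\{c_2,d_2\}$ corresponding to $X\cap V(G_1)$ and $X\cap V(G_2)$, and set $X_2:=X_2^-\cup\{c_2,d_2\}\subseteq V_2$. Writing $k:=|X\cap\{a,b\}|\in\{0,1,2\}$, a direct count gives $|X|=|X_1|+|X_2^-|-k$ and
\[
|E_G(X)|=|E_1(X_1)|+|E_2(X_2^-)|-2[k=2],
\]
the last term accounting for the removal of both $a_1b_1$ and $a_2b_2$, each of which would appear in $G[X]$ iff $k=2$. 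The target bound $|E_G(X)|\le 2|X|-2$ rearranges to
\[
|E_1(X_1)|+|E_2(X_2^-)|\le 2|X_1|+2|X_2^-|-2k-2+2[k=2].
\]

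The tools available are $(2,2)$-sparsity of $G_i$ (so $|E_i(Y)|\le 2|Y|-2$ for $Y\subsetneq V_i$ and $|E_i(V_i)|=2|V_i|-1$) and the identity $|E_2(X_2^-)|=|E_2(X_2)|-(2k+1)$, obtained by observing that the edges of $G_2[X_2]$ incident to $\{c_2,d_2\}$ consist of $c_2d_2$ together with the $2k$ edges from $\{c_2,d_2\}$ to $\{a_2,b_2\}\cap X_2^-$. A case split on $k$ then finishes the argument: for $k\le 1$ the forcing $X_1\subsetneq V_1$ and $X_2\subsetneq V_2$ makes the sparse bounds suffice; for $k=2$ properness of $X$ forbids $X_1=V_1$ and $X_2=V_2$ simultaneously, and any surplus $+1$ from a tight side is absorbed either by the improved bound $|E_2(X_2^-)|\le 2|X_2^-|-3$ (from the identity when $X_2\subsetneq V_2$) or by the critical equality $|E_2(V_2\setminus\{c_2,d_2\})|=2|V_2\setminus\{c_2,d_2\}|-2$, which follows from the cutpair discussion at the start of this section since $a_2b_2\in E_2$ and both sides of the cut must then have $f=2$. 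The main obstacle I anticipate is this bookkeeping in the $k=2$ case, where the $K_4$-gadget, the cutpair, and the simultaneous deletion of two distinguished edges all combine; once set up, the arithmetic is routine.
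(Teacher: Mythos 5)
Your proof is correct and follows essentially the same approach as the paper's: lift $X$ to its traces $X_1$ and $X_2^-$ on the two summands, write $i_G(X)$ as the sum of the induced counts minus a correction for the deleted edges at $a,b$, and apply the sparsity bounds of $G_1,G_2$ with a case split on $k=|X\cap\{a,b\}|$. The one difference is in the $k=2$ endgame, where you make explicit (via the auxiliary set $X_2=X_2^-\cup\{c_2,d_2\}$) the sharpened bound $i_{G_2}(X_2^-)\le 2|X_2^-|-3$ when $X_2\subsetneq V_2$; the paper instead stops at $i_G(X)\le 2|X|-1$ and asserts without detail that equality forces $X=V$, and your computation is precisely the justification that assertion requires.
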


\begin{proof}
We have $V=(V_1\setminus\{a_1,b_1\}) \cup (V_2\setminus\{a_2,b_2,c_2,d_2\}) \cup \{a,b\}$ so 
\begin{eqnarray*} 
|E|=|E_1|-1+|E_2|-6&=&2|V_1|-1+2|V_2|-1-7\\&=&2(|V_1|+|V_2|-4)-1=2|V|-1.
\end{eqnarray*} 

Let $X\subset V$. Let $X_i=(V_i\cap X)\cup (\{a,b\}\cap X)$ and let $X_i'=(V_i\cap X)\cup (\{a_i,b_i\}\cap X)$.
If $X$ contains both $a$ and $b$ then
\begin{eqnarray*} 
i_G(X) & = & i_{G_1}(X_1')+i_{G_2}(X_2')-2\leq 2|X_1'|-1+ 2|X_2'|-2-2\\ &=& 2(|X_1'|+|X_2'|)-5= 2|X|-1.
\end{eqnarray*}
where equality holds if and only if $X=V$. Similarly, if $X$ contains at most one of $a$ and $b$ then $i_G(X)\leq 2|X|-2$.

Conversely, suppose $G_1$ is not a circuit. Since $|E_1|=2|V_1|-1$ there exists $X$ properly contained
in $A\cup \{a,b\}$ with $i_{G_1}(X)=2|X|-1$. $X$ contains $a,b$ otherwise $X\subset V$. We have 
\begin{eqnarray*} 
i_G(X\cup B\cup \{a,b\}) & = & 2|X|-2+2|B\cup \{a,b\}|-3\\ &=&
2(|X\setminus \{a,b\}|+|B\cup \{a,b\}|-2)-1,
\end{eqnarray*}
a contradiction.

Now suppose $G_2$ is not a circuit. Since $|E_2|=2|V_2|-1$ there exists $X$ properly contained
in $B\cup \{a,b,c,d\}$ with $i_{G_2}(X)=2|X|-1$. 
$X$ contains $c,d$ otherwise $X$ is a subset of $V$ and thus $X$ contains $a,b$. We 
have 
\begin{eqnarray*} 
i_G((X\setminus \{c,d\})\cup A\cup \{a,b\}) & = & 2|X\setminus\{c,d\}|-2+2|A\cup \{a,b\}|-2-1\\ &=&
2(|X\setminus \{c,d\}|+|A\cup \{a,b\}|-2)-1,
\end{eqnarray*}
a contradiction.
\end{proof}

\begin{lem}\label{2sum}
Let $G_1=(V_1,E_1),G_2=(V_2,E_2)$ be graphs such that $G_i$ contains a two vertex cut
$a_i,b_i$ within $K_4(a_i,b_i,c_i,d_i)$. Then the $2$-join $G_1 \oplus_2 G_2=(V,E)$ (merging $a_1=a_2$ into $a$ and $b_1=b_2$ into 
$b$) is a circuit if and only if $G_1$ and $G_2$ are circuits. 
\end{lem}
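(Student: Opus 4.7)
The plan is to mirror the proof of Lemma \ref{1sum} very closely: first verify the edge count $|E| = 2|V|-1$ and then establish the sparsity bound $i_G(X) \le 2|X|-2$ for every proper $X \subsetneq V$ by splitting on how $X$ intersects the shared pair $\{a,b\}$. For the count, removing $c_i, d_i$ from $G_i$ deletes the five edges of $K_4(a_i,b_i,c_i,d_i)$ incident to $\{c_i,d_i\}$ (the edge $a_ib_i$ survives), and then the $2$-sum collapses the two surviving copies of $ab$ into one, giving $|E| = (|E_1|-5) + (|E_2|-5) - 1 = 2(|V_1|+|V_2|-6) - 1 = 2|V|-1$.

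For the sparsity check I would assign to each $X \subset V$ the natural preimages $X_i \subseteq V_i \setminus \{c_i,d_i\}$, so that $|X_1|+|X_2| = |X| + |X \cap \{a,b\}|$ and the only edge that the sum $i_{G_1}(X_1) + i_{G_2}(X_2)$ can double-count is the shared edge $ab$, and only when $\{a,b\} \subseteq X$. If $\{a,b\} \not\subseteq X$, then $ab \notin i_G(X)$ and each $X_i$ is a proper subset of $V_i$; applying the sparsity of each circuit gives $i_G(X) = i_{G_1}(X_1) + i_{G_2}(X_2) \le 2(|X_1|+|X_2|) - 4 \le 2|X| - 2$, with the usual trivial adjustment when some $|X_i| \le 1$.

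The main obstacle is the case $\{a,b\} \subseteq X$. Here $ab$ is double-counted, so $i_G(X) = i_{G_1}(X_1) + i_{G_2}(X_2) - 1 \le 2(|X_1|+|X_2|) - 5 = 2|X|-1$, and I must rule out equality whenever $X \subsetneq V$. Equality would force each $X_i$ to be a critical subset of $V_i \setminus \{c_i,d_i\}$ containing $\{a_i,b_i\}$. The structural observation I would prove is that the only such critical subset is $V_i \setminus \{c_i,d_i\}$ itself: adding back $c_i, d_i$ to a strictly smaller critical $X_i$ supplements it by exactly the five $K_4$-edges at $\{c_i,d_i\}$ (since $a_i, b_i \in X_i$ already), producing a proper subset of $V_i$ with $i_{G_i}(X_i \cup \{c_i,d_i\}) = 2(|X_i|+2) - 1$ induced edges, contradicting the fact that $G_i$ is a circuit. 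Consequently $X_i = V_i \setminus \{c_i,d_i\}$ for both $i$, which is precisely $X = V$, and the proof concludes.
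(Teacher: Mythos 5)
Your proof is correct and follows essentially the same route as the paper: verify the edge count, then bound $i_G(X)$ by splitting on $|X \cap \{a,b\}|$ and invoking sparsity of each $G_i$ on the natural preimages $X_i$. The one place you go noticeably further than the paper is the ``equality iff $X = V$'' claim: the paper's proof merely asserts that the final inequality chain in the $\{a,b\} \subseteq X$ case is tight only when $X = V$, while you actually justify it by showing that a proper critical $X_i \subsetneq V_i \setminus \{c_i,d_i\}$ containing $a_i,b_i$ would, upon re-adjoining $c_i,d_i$, gain exactly the five $K_4$-edges and produce a proper subset of $V_i$ with $2|\cdot|-1$ induced edges, contradicting that $G_i$ is a circuit. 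That fill-in is exactly what is needed and is a genuine improvement in rigour over the terse assertion in the source.
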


\begin{proof}
We have $V=(V_1\setminus\{a_1,b_1,c_1,d_1\}) \cup (V_2\setminus\{a_2,b_2,c_2,d_2\}) \cup \{a,b\}$ so
 \begin{eqnarray*} 
|E|=|E_1|-6+|E_2|-6+1&=&2|V_1|-1+2|V_2|-1-11\\&=&2(|V_1|+|V_2|-6)-1=2|V|-1.
\end{eqnarray*} 

Let $X\subset V$. Let $X_i=(V_i\cap X)\cup (\{a,b\}\cap X)$ and let $X_i'=(V_i\cap X)\cup (\{a_i,b_i\}\cap X)$.
If $X$ contains both $a$ and $b$ then
\begin{eqnarray*} 
i_G(X) & = & i_{G_1}(X_1')+i_{G_2}(X_2')-1\leq 2|X_1'|-2+ 2|X_2'|-2-1\\ &=& 2(|X_1'|+|X_2'|-2)-1= 2|X|-1.
\end{eqnarray*}
where equality holds if and only if $X=V$. Similarly, if $X$ contains at most one of $a$ and $b$ then $i_G(X)\leq 2|X|-2$.

For the converse, by symmetry, it is enough to show that $G_1$ is a circuit.

Suppose $G_1$ is not a circuit. Since $|E_1|=2|V_1|-1$ there exists $X$ properly contained
in $A\cup \{a,b,c,d\}$ with $i_{G_1}(X)=2|X|-1$. $X$ contains $c,d$ otherwise $X$ is a subgraph of $G$ and thus $X$ contains $a$ and $b$. We 
have 
\begin{eqnarray*} 
i_G((X\setminus \{c,d\})\cup (B\cup \{a,b\})) & = & 2|X\setminus\{c,d\}|-2+2|B\cup \{a,b\}|-2-1\\ &=&
2(|X\setminus \{a,b\}|+|B\cup \{a,b\}|-2)-1,
\end{eqnarray*}
a contradiction.
\end{proof}

\subsection{Circuits with 3-edge-cutsets}

We also require the $3$-join operation. Let $G=(V,E)$ be a circuit with a non-trivial $3$-edge-cutset $a_1a_2,b_1b_2,c_1c_2$ with a bipartition $A,B$ of $V$ such that 
$f(G[A])= f(G[B])$. A \emph{$3$-separation} over the cutset $a_1a_2,b_1b_2,c_1c_2$ forms disjoint graphs 
$G[A]\cup v_1 \cup \{a_1v_1,b_1v_1,c_1v_1\}$ and $G[B]\cup v_2\cup\break \{a_2v_2,b_2v_2,c_2v_2\}$. 

\begin{lem}\label{3sum}
Let $G_1=(V_1,E_1),G_2=(V_2,E_2)$ be graphs. Then the $3$-join 
$G=G_1 \oplus_3 G_2=(V,E)$ (deleting $v_i\in V_i$ with $d_{G_i}(v_i)=3$ and $N(v_i)=\{a_i,b_i,c_i\}$ for $i=1,2$ and adding 
$a_1a_2,b_1b_2,c_1c_2$) is a circuit if and only if $G_1$ and $G_2$ are circuits. 
\end{lem}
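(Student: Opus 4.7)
The plan is to follow the template of Lemmas \ref{1sum} and \ref{2sum}. A direct count verifies
\[|E| = |E_1| - 3 + |E_2| - 3 + 3 = 2|V_1| + 2|V_2| - 5 = 2|V| - 1,\]
since $|V| = |V_1| + |V_2| - 2$. For the sparsity condition, fix $X \subsetneq V$ and set $X_1 = X \cap V_1$, $X_2 = X \cap V_2$. Let $k_i = |N_{G_i}(v_i) \cap X_i| \in \{0,1,2,3\}$, and let $k$ denote the number of the three crossing edges $a_1a_2, b_1b_2, c_1c_2$ that lie in $G[X]$, so that $k \leq \min(k_1, k_2)$ and $i_G(X) = i_{G_1}(X_1) + i_{G_2}(X_2) + k$.

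Because $v_i \notin X_i$, the circuit property of $G_i$ yields the basic bound $i_{G_i}(X_i) \leq 2|X_i| - 2$ whenever $X_i \neq \emptyset$. A sharper bound comes from the identity $i_{G_i}(X_i \cup \{v_i\}) = i_{G_i}(X_i) + k_i$: applying the $(2,2)$-sparsity inequality to $X_i \cup \{v_i\}$ gives $i_{G_i}(X_i) \leq 2|X_i| - k_i$ whenever $X_i \cup \{v_i\} \subsetneq V_i$, while $X_i \cup \{v_i\} = V_i$ forces $i_{G_i}(X_i) = 2|X_i| - 2$ via $|E_i| = 2|V_i| - 1$. With these bounds I split into cases. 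If one of $X_1, X_2$ is empty, then $k = 0$ and $i_G(X) \leq 2|X| - 2$ immediately. If both are nonempty and $k \leq 2$, summing the basic bounds gives $i_G(X) \leq 2|X| - 4 + k \leq 2|X| - 2$. The only delicate case is $k = 3$, which forces $k_1 = k_2 = 3$: if both $X_i \cup \{v_i\}$ are proper in $V_i$ then the sharper bound yields $i_G(X) \leq 2|X| - 3$; if exactly one equals $V_i$, a mixed application gives $i_G(X) \leq 2|X| - 2$; and if both equal $V_i$ then $X = V$, contradicting $X \subsetneq V$. Finally a direct computation $i_G(V) = (2|V_1| - 4) + (2|V_2| - 4) + 3 = 2|V| - 1$ confirms the edge total.

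The main (mild) obstacle is the bookkeeping in the $k = 3$ case, where the naive circuit bounds of $G_1$ and $G_2$ alone fall short and one must invoke the sharper inequality obtained by re-inserting the degree-$3$ vertex $v_i$. One also needs to confirm that equality $i_G(X) = 2|V| - 1$ is attained only at $X = V$, so that no proper subset of $V$ violates the $(2,2)$-sparsity condition.
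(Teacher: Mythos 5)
Your proof is correct and follows essentially the same approach as the paper: verify the edge count, bound $i_G(X)$ by splitting into contributions from $X_1$, $X_2$, and the crossing edges, and handle the one delicate case ($t=3$ crossing edges, in your notation $k=3$) by re-inserting the deleted degree-$3$ vertex $v_i$ into $X_i$ and invoking the $(2,2)$-sparsity of $G_i$ on the set $X_i \cup \{v_i\}$. The paper phrases that last step as ``equality holds only if $X=V$, else adding back $v_i$ contradicts $G_i$ being a circuit,'' while you precompute the sharper bound $i_{G_i}(X_i)\leq 2|X_i|-k_i$; these are the same observation in slightly different packaging, and your more explicit case split on $k$ is a perfectly faithful rendition of the argument.
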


\begin{proof}
We have $V=(V_1\setminus v_1) \cup (V_2\setminus v_2)$ so
 \begin{eqnarray*} 
|E|=|E_1|-3+|E_2|-3+3&=&2|V_1|-1+2|V_2|-1-3\\&=&2(|V_1|+|V_2|-2)-1=2|V|-1.
\end{eqnarray*} 

Let $X\subset V$. Let $X_i=(V_i\cap X)$.
$X$ contains at least one of $a_i,b_i,c_i$, otherwise $X \subset X_i$ and so $i(X)\leq 2|X|-2$. Let $0\leq t\leq 3$ be the number of 
edges in the subgraph induced by $X$ from the set $\{a_1a_2,b_1b_2,c_1c_2\}$. Then
\begin{eqnarray*} 
i_G(X) & = & i_{G_1}(X_1)+i_{G_2}(X_2)+t\\ &\leq& 2|X_1|-2+ 2|X_2|-2+t\\ &\leq& 2|X|-1.
\end{eqnarray*}
where equality holds if and only if $X=V$; otherwise for some $i$, $X_i \subsetneq V_i$, $i(X_i)=2|X_i|-2$ and $X_i$ contains $a_i,b_i,c_i$
so adding back $v_i$ contradicts $G_i$ being a circuit.

For the converse, clearly $f(G[A])= f(G[B])=2$.
By symmetry it is enough to show that $G_1$ is a circuit.

Suppose $G_2$ is not a circuit. Since $|E_1|=2|V_2)|-1$ there exists $X$ properly contained
in $A\cup v_1$ with $i_{G_1}(X)=2|X|-1$. $X$ contains $v_1$, otherwise $X$ is a subgraph of $G$, and thus contains $a_1,b_1,c_1$. We 
have 
\begin{eqnarray*} 
i_G((X\setminus v_1) \cup B) & = & 2|X\setminus v_1|-2+2|B|-2+3\\ &=&
2(|X\setminus v_1|+|B|)-1,
\end{eqnarray*}
a contradiction.
\end{proof}

\section{A Recursive Construction of Circuits}
\label{sec:recursion}

It remains to deal with the case when every cutpair $a,b$ in $G$ with associated bipartition $A,B$ is such that at least one of the 
subgraphs induced by $A\cup\{a,b\}$ and $B\cup\{a,b\}$ is isomorphic to $K_4$. Here the $2$-separation move results in a copy of
$G$ and a copy of $K_4 \sqcup K_4$. However we do not need a new recursive move to deal with this case. Consider a graph $G$
with $n$ cutpairs and each cutpair $a_i,b_i$ with bipartition $A_i,B_i$ leaves $G[A_i\cup\{a_i,b_i\}]$ isomorphic to 
$K_4(a_i,b_i,c_i,d_i)$. Now delete each $c_i,d_i$ and all incident edges and add a second copy of each edge $a_ib_i$. We denote the
resulting multigraph as $G^-=(V^-,E^-)$. $G^-$ is a $3$-connected \emph{multicircuit}, see Figure \ref{usemulti}. None of the $a_i$ or 
$b_i$ are nodes; if $d_G(a_i)=3$ then $N(a_i)=\{b_i,x\}$ for some $x$ but then $b_i,x$ is a cutpair for $G^-$ and hence for $G$. 
Thus every node in $G^-$ has $3$ distinct neighbours. 

There is a node in a multicircuit in which an inverse Henneberg $2$ move
results in a multicircuit by Frank and Szeg\"{o} \cite[Theorem $1.10$]{F&S2}.
However we need the following stronger result which follows by the same proof as Theorem \ref{recursivethm2},  noting that the simplicity assumption did not provide a simplification. By admissible here we mean that there is an inverse Henneberg $2$ move on a node that results in a circuit and that the new edge does not create a double edge.

\begin{center}
\begin{figure}[ht]
\centering
\includegraphics[width=6.7cm]{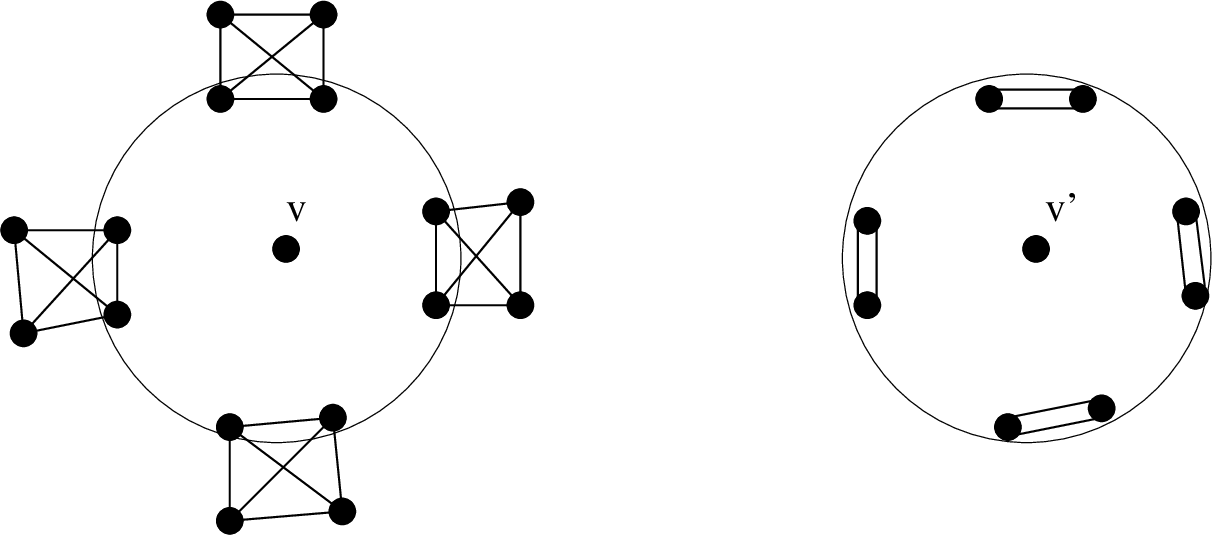}
\caption{For every $2$-vertex cutset with one component a copy of $K_4$, replace each copy with a double edge. We show that if $v'$ is an admissible node then so is $v$.}
\label{usemulti}
\end{figure}
\end{center}

\vspace{-1.1cm}

\begin{prop}\label{3conmulti}
Let $G=(V,E)$ be a multigraph with $|V|\geq 6$. Let $G$ be a $3$-connected multicircuit with no non-trivial $3$-edge-cutsets in which 
every node has $3$ distinct neighbours. Then $G$ contains an admissible node. 
\end{prop}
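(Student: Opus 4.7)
My plan is to re-run the proof of Theorem \ref{recursivethm2} in the multigraph setting, checking that each ingredient survives the relaxation from simple to multigraph. In this generality admissibility bundles two conditions on an inverse Henneberg move at a node $v$ with $N(v)=\{u,w,z\}$: the result must be a multicircuit, and the created edge $uw$ must not already be present in $E$ (so no new parallel pair is introduced). The $(2,2)$-sparsity count immediately rules out loops, via $i(\{v\})\le 0$, and bounds the multiplicity of any $uw\in E$ by $2$ when $\{u,w\}$ is a proper pair, via $i(\{u,w\})\le 2$. In particular the only form of non-simplicity that can occur is a single pair of parallel edges.

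First I would verify the counting and connectivity lemmas. Lemmas \ref{admlem2}, \ref{admlem5}, \ref{admlem8}, and \ref{admlem9} rely only on $f$-submodularity and on the global identity $\sum_v(4-d(v))=2$; they are blind to edge multiplicity. Lemma \ref{admlem1} then reads: $uv,wv$ is inadmissible iff either $uw$ is already an edge of $G$ or there is a $v$-critical set containing $u,w$ but not $v,z$, the first clause now capturing the ``no new parallel edge'' requirement. Lemmas \ref{star} and \ref{oneedgeleaf} use only critical-set submodularity and so transfer verbatim, as does Lemma \ref{admlem8}.

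The step that needs care is Lemma \ref{k4fix}, since the definition of $V_3^*$ refers to copies of $K_4$. I would argue that a $4$-vertex critical set $X$ in a multicircuit has $i(X)=6$; combined with the pairwise bound $i(\{u,w\})\le 2$ for any proper pair (which applies since $|V|\ge 6$), this forces $G[X]$ to be a simple $K_4$, because a double edge inside $X$ would starve one of the remaining three pairs of the edges it needs. With this observation Lemmas \ref{nok4forest}, \ref{k4fix}, and \ref{lem3.312} transfer with no essential change. I expect this $K_4$ analysis to be the main obstacle, since one must exclude ``multi-$K_4$s'' that might otherwise inflate $V_3^*$, break the forest structure of Lemma \ref{nok4forest}, or invalidate the cut-pair argument in the proof of part $3$ of Lemma \ref{k4fix}.

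With these verifications in hand the remainder is a line-for-line copy of the proof of Theorem \ref{recursivethm2}: reduce via Lemmas \ref{star} and \ref{oneedgeleaf} to the case where no neighbours of any $V_3^*$-node are joined; if $\X=\emptyset$ conclude via Lemma \ref{admlem3}; otherwise take a maximal $X\in\X$, use Lemma \ref{k4fix} to locate a leaf $u$ of the induced subforest on $V_3^*\cap(V\setminus X\setminus v)$, and apply Lemma \ref{lem3.312} together with the maximality of $X$ to contradict non-admissibility of $u$. The hypothesis that every degree-$3$ vertex has three distinct neighbours is exactly what keeps each of these arguments well-posed in the multigraph world, since it ensures that an inverse Henneberg move at any node has a well-defined choice of three candidate replacement edges.
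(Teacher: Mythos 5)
Your overall plan — re-run the proof of Theorem \ref{recursivethm2} in the multigraph setting and verify each lemma — is exactly what the paper asserts, and you are right that Lemmas \ref{admlem2}, \ref{admlem5}, \ref{admlem8}, \ref{admlem9}, \ref{admlem1}, \ref{star}, and \ref{oneedgeleaf} go through with the hypothesis that degree-$3$ vertices have three distinct neighbours.

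However, the step you flag as ``the main obstacle'' is handled incorrectly. The claim that a $4$-vertex critical set $X$ in a multicircuit with $|V|\geq 6$ must induce a simple $K_4$ is false: having $i(X)=6$ with each pair carrying at most $2$ edges does not force one edge per pair, since $6$ edges can be distributed as, say, $2+1+1+1+1+0$. Concretely, take $V=\{a,b,c,d,e,f\}$ with edge multiset $\{ab,ab,ac,ad,bc,bd,ae,cf,de,df,ef\}$. This is a $3$-connected multicircuit with no non-trivial $3$-edge-cutsets, its degree-$3$ vertices $c,e,f$ all have three distinct neighbours, and $X=\{a,b,c,d\}$ is critical with $i(X)=6$ while $G[X]$ has a doubled edge $ab$ and no edge $cd$. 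So the hypotheses of Proposition \ref{3conmulti} do not exclude such ``multi-$K_4$'s,'' and your reduction to the simple setting does not hold. The phrase ``would starve one of the remaining three pairs'' seems to be a counting slip (there are five other pairs, and nothing forbids one of them from being empty). What actually saves the argument is different: Lemma \ref{k4fix} part $3$ and Lemma \ref{nok4forest} are only ever applied with the $X_i$ ranging over the \emph{simple} $K_4$'s of $G$, so $V_3^*$ is still well defined as the set of nodes lying in no simple $K_4$, and nodes sitting inside a non-simple $4$-vertex critical set simply remain in $V_3^*$ and are treated by the rest of the machinery (a node adjacent to both endpoints of the doubled edge is caught by Lemma \ref{star} or \ref{oneedgeleaf}; otherwise it is handled by the maximal node-critical-set argument). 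You would need to make that case, not assert that multi-$K_4$'s do not occur.
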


\subsection{Proof of Theorem \ref{recursivethm1}}

We are now ready to prove our main result.

\begin{proof}[Proof of Theorem \ref{recursivethm1}]
By Lemmas \ref{hen2move}, \ref{1sum}, \ref{2sum} and \ref{3sum} a connected graph built up recursively from disjoint copies of base 
graphs by $1$-joins, $2$-joins, $3$-joins and Henneberg $2$ moves is a circuit.

Conversely we use induction on $|V|$. Since $K_5\setminus e$ is the unique circuit on at most $5$ vertices, by Theorem \ref{recursivethm2},
we may apply an inverse Henneberg $2$ move whenever $G$ is $3$-connected with no non-trivial $3$-edge cutsets. 
If $G$ is $3$-connected with a non-trivial $3$-edge-cutset then, by Lemma \ref{3sum} we may apply a $3$-separation to $G$ resulting
in smaller circuits.

If $G$ is not $3$-connected then there is a cutpair. Choose a cutpair $a,b$. If $ab \notin E$ then by Lemma \ref{1sum} we can apply a 
$1$-separation in such a way that the resulting graphs are circuits. 
Suppose then for every cutpair $a,b$, $ab \in E$ and suppose there is a choice of $a,b$ such that $G[A\cup\{a,b\}]$ and $G[B\cup\{a,b\}]$ 
are not isomorphic to $K_4$. Then by Lemma \ref{2sum} we can apply a $2$-separation in such a way that the resulting graphs are 
circuits. 

Now if every minimal choice of cutpair results in $G[A\cup\{a,b\}]\cong\break K_4(x_i,y_i,z_i,w_i)$ where $x_i,y_i$ is the 
cutpair and the corresponding multigraph $G^+$, as above, has $|V^+|\geq 6$ then 
the result follows from Proposition \ref{3conmulti}.

It remains to check the cases when $|V^+|\leq 5$. If $|V^+|=2$ then $G\cong K_4 \sqcup K_4$. If $|V^+|=3$ then $G\cong K_4\veebar K_4$.
If $|V^+|=4$ or $|V^+|=5$ there are a small number of cases that are each easy to check (there is 
an admissible node or a separation to smaller circuits).
\end{proof}

\section{Connected Matroids and Rigid Frameworks}
\label{sec:rigidity}

In the remainder of the paper we consider potential applications of our results to frameworks on surfaces.

\subsection{Rigidity on the cylinder}

A \emph{framework} $(G,p)$ on the cylinder $S^1\times \bR$ in $\bR^3$ is the combination of a graph $G$ and a map $p:V\rightarrow S^1\times \bR$. We will 
focus only on when such frameworks are generic: there are no algebraic dependencies among the coordinates of the framework points 
that are not required by $\M$. The \emph{cylinder rigidity matrix} $R_{S^1\times \bR}(G,p)$ is the $(|E|+|V|) \times 3|V|$ matrix where the first 
$|E|$ rows correspond to the edges and the entries in the row for edge $uv$ are $0$ except in the column triples corresponding to
$u$ and $v$ where the entries are $p(u)-p(v)$ and $p(v)-p(u)$ respectively. The final $|V|$ rows correspond to the vertices and the 
entries in the row for vertex $i$ are zero except in the column triple corresponding to $i$ where the entry is $N(p(i))$, the surface 
normal to the point $p(i)$. A framework $(G,p)$ on $S^1\times \bR$ is \emph{generic} if the only polynomial equations satisfied by the coordinates of $p$ are those that define $S^1\times \bR$.
Let $\R_{S^1\times \bR}$ denote the \emph{cylinder rigidity matroid}, that is the linear matroid induced by linear independence in the rows
of $R_{S^1\times \bR}(G,p)$ for generic $p$. A framework is \emph{infinitesimally rigid} if its edge set has maximal rank in $\R_{S^1\times \bR}$. 

More detailed definitions may be found in \cite{NOP}, see also \cite{GSS} for a detailed study of rigidity matroids.

\begin{thm}[\cite{NOP}]\label{t:lamancylinder}
Let $G=(V,E)$ be a graph with $|V|\geq 4$ and let $(G,p)$ be a generic framework in $3$-dimensions constrained to $S^1\times \bR$. 
Then the matroids $\R_{S^1\times \bR}$ and $M^*(2,2)$ are isomorphic.
\end{thm}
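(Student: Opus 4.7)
The theorem is a matroid-theoretic reformulation of the main Laman-type result of \cite{NOP}, so the plan is to verify it by showing that the rank functions of $\R_\M$ and of $M^*(2,2)$ agree on every subset of $E$. Fix a generic $p\colon V\to\M$; genericity guarantees that every submatrix of $R_\M(G,p)$ achieves its generic rank, so both rank functions depend only on the combinatorics of the edge set.

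First I would establish that every $\R_\M$-independent set of edges is $(2,2)$-sparse. For $F\subseteq E$ let $V(F)$ be its vertex span, and consider the submatrix of $R_\M(G,p)$ consisting of the $|F|$ edge rows for $F$ together with the $|V(F)|$ surface-normal rows for $V(F)$, restricted to the $3|V(F)|$ columns indexed by $V(F)$. Its kernel contains the restriction to $V(F)$ of the space of infinitesimal isometries of the cylinder, namely rotation about the axis and translation along it, a two-dimensional space. Hence its rank is at most $3|V(F)|-2$, and if the rows are independent then $|F|+|V(F)|\leq 3|V(F)|-2$, i.e.\ $|F|\leq 2|V(F)|-2$. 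By the hereditary property of matroids this applies to every $F'\subseteq F$, so $F$ is $(2,2)$-sparse as an edge set.

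For the converse I would import the main theorem of \cite{NOP}: a graph $G$ with $|V|\geq 4$ is minimally infinitesimally rigid on $\M$, that is $\rank R_\M(G,p)=|E|+|V|=3|V|-2$, if and only if $G$ is $(2,2)$-tight. This identifies the bases of $\R_\M$ on the ground set $E$ with exactly the $(2,2)$-tight spanning subgraphs, which are the bases of $M^*(2,2)$. Combined with the necessary direction, both matroids have the same bases on $E$, so their rank functions coincide, and in particular they are isomorphic.

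The only substantive work lies inside the cited result: showing inductively, along a recursive construction of $(2,2)$-tight graphs from small base graphs via Henneberg-type and sum operations, that each step preserves the generic full-rankness of the edge rows of $R_\M$. Within the present paper one needs only the translation, which is routine once the dimension count for the cylinder-invariant kernel is in hand; the genuinely delicate step, the preservation of rank under the basic moves, is done in \cite{NOP}.
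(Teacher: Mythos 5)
The paper gives no proof of this theorem: it simply records that the statement is the main result of \cite{NOP} restated in matroid terms, which is exactly the reading you adopt, so your proposal is consistent with the paper's approach. Your reconstruction of the translation---a two-dimensional space of trivial cylinder motions (axial rotation and axial translation) forcing $(2,2)$-sparsity of $\R_\M$-independent edge sets, plus the minimal-rigidity $\Leftrightarrow$ $(2,2)$-tightness characterisation of \cite{NOP} for the converse---fills in detail the paper leaves implicit; the one small imprecision is that the bases of $\R_\M$ restricted to $E$ need not be $(2,2)$-tight spanning subgraphs when $G$ itself is not rigid, so to conclude the matroids agree one should extend a given $(2,2)$-sparse set $F$ to a $(2,2)$-tight graph on $V(F)$ and apply the \cite{NOP} result to that subgraph (or cite the independence characterisation in \cite{NOP} directly), rather than apply it only to $G$.
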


Similarly if $\R_2$ denotes the rigidity matroid for generic frameworks in $\bR^2$, then Laman's theorem \cite{Lam} states $\R_2\cong M(2,3)$.
We will need the following corollary to Theorem \ref{t:lamancylinder}. A \emph{redundantly rigid} framework $(G,p)$ on $S^1\times \bR$ is a framework
such that after deleting any single edge from $G$ the rigidity matroid still has maximal rank.

\begin{cor}\label{corredundantcircuits}
 Let $G=(V,E)$ and let $p$ be generic. Then $(G,p)$ is redundantly rigid on $S^1\times \bR$ if and 
only if $(G,p)$ is infinitesimally rigid on $S^1\times \bR$ and every edge of $G$ belongs to a $\R_{S^1\times \bR}$-circuit.
\end{cor}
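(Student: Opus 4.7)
The plan is to recognise the statement as the standard matroid‐theoretic fact that an element of a matroid lies in a circuit if and only if deleting it does not drop the rank, combined with the dictionary between infinitesimal rigidity on the cylinder and rank in $\R_\M$ supplied by Theorem \ref{t:lamancylinder}.

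First I would record the rank dictionary. Because the $|V|$ normal rows of $R_\M(G,p)$ occupy disjoint column triples, they are linearly independent of each other and of the edge rows, contributing exactly $|V|$ to the total rank. Hence $(G,p)$ is infinitesimally rigid, i.e.\ $\rank R_\M(G,p) = 3|V|-2$, if and only if the edge rows have rank $2|V|-2$, which by Theorem \ref{t:lamancylinder} is the maximum rank that any set of edges can have in $\R_\M$. In particular, for any $H \subseteq E$, the framework $(V,H;p)$ is infinitesimally rigid on $\M$ exactly when $\rank_{\R_\M} H = 2|V|-2$.

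For the forward implication, assume $(G,p)$ is redundantly rigid. Then for every $e \in E$ the framework $(V,E\setminus e;p)$ is infinitesimally rigid, so by the dictionary $\rank_{\R_\M}(E\setminus e) = 2|V|-2 = \rank_{\R_\M}(E)$. Taking $H=E$ shows that $(G,p)$ itself is rigid, and the equality $\rank_{\R_\M}(E\setminus e) = \rank_{\R_\M}(E)$ says that $e$ is not a coloop of $\R_\M|_E$, hence lies in some $\R_\M$-circuit. Conversely, suppose $(G,p)$ is infinitesimally rigid and each edge $e$ lies in some $\R_\M$-circuit $C \subseteq E$. Then $e$ is in the closure of $C\setminus e$, and a fortiori in the closure of $E\setminus e$, so $\rank_{\R_\M}(E\setminus e) = \rank_{\R_\M}(E) = 2|V|-2$. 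Applying the dictionary again, $(V,E\setminus e;p)$ is infinitesimally rigid for every $e$, which is exactly the definition of redundant rigidity.

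This argument is essentially a one-line application of matroid theory once the translation between rigidity matrix rank and $\R_\M$-rank is in hand; there is no real obstacle beyond being careful to separate the contributions of the edge rows and the normal rows when comparing $\rank R_\M(G,p) = 3|V|-2$ with the edge-matroid rank $2|V|-2$ used in Theorem \ref{t:lamancylinder}.
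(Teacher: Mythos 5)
Your argument is correct, and since the paper states this corollary without proof (treating it as an immediate consequence of Theorem \ref{t:lamancylinder}), what you have written is exactly the standard matroid-theoretic justification the author intends: an element of a matroid lies in a circuit if and only if it is not a coloop, i.e.\ deleting it does not drop the rank, combined with the translation $\rank R_\M(G,p)=|V|+\rank_{\R_\M}(E)$ between infinitesimal rigidity and rank in $\R_\M$. One small point worth making explicit is that the identity $\rank R_\M(G,p)=|V|+\rank_{\R_\M}(E)$ rests on the normal rows being independent of the edge rows at a generic configuration; this is part of the content of Theorem \ref{t:lamancylinder} (which makes $\R_\M$ a matroid on $E$ of rank at most $2|V|-2$ and isomorphic to $M^*(2,2)$), so it is legitimate to invoke, but it should be acknowledged rather than asserted as obvious.
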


\begin{rem}\label{ordermoves}{\rm
By Theorem \ref{t:lamancylinder} a generic framework $(G,p)$ on $S^1\times \bR$ is
rigid if and only if $G$ contains a spanning $(2,2)$-tight subgraph.
However as $K_{3,6}$ illustrates (see also \cite[Figure $6$]{J&J} for the plane case) extending Theorem \ref{recursivethm1} from circuits to 2-connected redundantly rigid graphs is non-trivial.
For example $K_{3,6}$ is not a circuit so one of the 
operations must be an edge addition.
The last move must be a Henneberg $2$ move since $K_{3,6}$ is $3$-connected with no non-trivial $3$-edge cutsets and minimal in the 
sense that removing any edge results in a graph $G=(V,E)$ with $|E|=2|V|-1$ that is not a circuit.
}\end{rem}

\subsection{$\R_{S^1\times \bR}$-connected Graphs}
\label{rmconsec}

Following \cite{J&J}, for $\R_{S^1\times \bR}=(E,I)$, define a relation on $E$ by saying $e,f \in E$ are related if $e=f$owing \cite{J&J}, for $\R_{S^1\times \bR}=(E,I)$, define a relation on $E$ by saying $e,f \in E$ are related if $e=f$
or if there is a $\R_{S^1\times \bR}$-circuit $C$ with $e,f \in C$. 
We abuse notation slightly by referring to $C$ as both the circuit in $\R_{S^1\times \bR}$ and the graph induced by the circuit, i.e.
the $\R_{S^1\times \bR}$-circuit.
This is an equivalence relation and the equivalence classes are the components
of $\R_{S^1\times \bR}$. If $\R_{S^1\times \bR}$ has at least two elements and only one component then it is \emph{$\R_{S^1\times \bR}$-connected}. $G$ is \emph{$\R_{S^1\times \bR}$-connected} if
$\R_{S^1\times \bR}$ is connected. The $\R_{S^1\times \bR}$-components of $G$ are the subgraphs of $G$ induced by the components of $\R_{S^1\times \bR}$.

Since bases in $M^*(2,2)$ can contain cut-vertices while circuits cannot, to link redundantly rigid frameworks and $\R_{S^1\times \bR}$-connected graphs requires 2-connectivity.

\begin{thm}\label{Mconnectedthm*}
A graph $G$ is $2$-connected with a redundantly rigid realisation on $S^1\times \bR$ if and only if $G$ is 
$\R_{S^1\times \bR}$-connected.
\end{thm}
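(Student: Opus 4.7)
The plan is to prove the two directions separately, with the harder direction being a matroid-theoretic argument by contradiction along the lines of \cite[Theorem 3.2]{J&J}.

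For the easier direction, assume $G$ is $\R_{\M}$-connected. By the definition of $\R_{\M}$-components, the unique component (with $\geq 2$ edges) is a redundantly rigid subgraph of $G$; since its edge set is all of $E(G)$, $G$ itself has a redundantly rigid realisation. For 2-connectedness, each $\R_{\M}$-circuit is an $M^*(2,2)$-circuit by Theorem \ref{t:lamancylinder} and hence is 2-connected by Lemma \ref{admlem9}(1); since every two edges of $G$ lie in a common such circuit, no cut vertex of $G$ can exist.

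For the harder direction, assume $G$ is 2-connected and redundantly rigid, and suppose for contradiction that the $\R_{\M}$-components $H_1,\ldots,H_k$ of $G$ satisfy $k\geq 2$. By Corollary \ref{corredundantcircuits} every edge lies in a circuit, so no $H_i$ is a single bridge edge; each is redundantly rigid with $|V(H_i)|\geq 5$ and $r(H_i)=2|V(H_i)|-2$. The key intersection bound is $|V(H_i)\cap V(H_j)|\leq 1$ for $i\neq j$: as distinct matroid components, $\R_{\M}|_{E(H_i)\cup E(H_j)}=\R_{\M}|_{E(H_i)}\oplus \R_{\M}|_{E(H_j)}$ has rank $2|V(H_i)|+2|V(H_j)|-4$, which cannot exceed the cylinder rigidity ceiling $2|V(H_i)\cup V(H_j)|-2$, forcing the bound. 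The rank equation $2|V|-2=r(G)=\sum r(H_i)$ then yields $\sum_v(m_v-1)=k-1$, where $m_v=|\{i:v\in V(H_i)\}|$. Letting $V_s=\{v:m_v\geq 2\}$ and forming the bipartite incidence graph $B$ on $\{H_1,\ldots,H_k\}\cup V_s$, a short count gives $|E(B)|=|V(B)|-1$; since $G$ is connected, $B$ is connected and hence a tree. Each $v\in V_s$ has $B$-degree at least $2$, so every leaf of $B$ is a component; pick such a leaf $H_i$ with its unique shared vertex $v$. By the intersection bound and the edge partition, every edge incident to $V(H_i)\setminus\{v\}$ has both endpoints in $V(H_i)$, so deleting $v$ disconnects the nonempty set $V(H_i)\setminus\{v\}$ from the nonempty set $V(G)\setminus V(H_i)$, making $v$ a cut vertex and contradicting 2-connectedness.

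The main obstacle is the intersection bound; its proof needs both the direct-sum decomposition of matroid components and the cylinder-specific rank ceiling $2n-2$ (this is where the cylinder and plane cases differ in their constants, though the final inequality $|V(H_i)\cap V(H_j)|\leq 1$ happens to coincide). Once the bound is in place, the bipartite-tree counting routinely converts $k\geq 2$ into a cut vertex that 2-connectedness forbids.
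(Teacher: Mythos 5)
Your proposal is correct and follows essentially the same approach as the paper (and the source \cite{J&J}): a rank-counting argument on the $\R_\M$-components that produces a component attached to the rest of $G$ by at most one vertex, contradicting $2$-connectedness. One small remark: the intersection bound $|V(H_i)\cap V(H_j)|\leq 1$ that you flag as ``the main obstacle'' is in fact not needed --- neither the rank identity $\sum(m_v-1)=k-1$, nor the bipartite tree count, nor the final cut-vertex extraction (which uses only that a non-shared vertex of a leaf component has $m_u=1$) relies on it --- and the paper proceeds directly via the inequality $\sum_{v\in\theta(X)}d_X(v)<2|X|$ without ever establishing that bound.
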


\begin{proof}
Suppose $G$ is $\R_{S^1\times \bR}$-connected. $G$ is infinitesimally rigid since there is only one $\R_{S^1\times \bR}$-connected component. $\R_{S^1\times \bR}$ is connected so every 
edge is in a $\R_{S^1\times \bR}$-circuit. Thus $G$ has a redundantly rigid realisation by Corollary \ref{corredundantcircuits}.
Also Lemma \ref{admlem9} implies $G$ is $2$-connected.

Conversely let $X$ be the set of $\R_{S^1\times \bR}$-connected components of $G$ and $\theta(X)$ the set of vertices of $G$ 
belonging to two distinct elements of $X$. Let $d_X(v)$ denote the number of elements of $X$ containing $v$. Let $r(G)$ denote the rank
of the rigidity matroid $\R_{S^1\times \bR}(G,p)$.
Then
\[ 2|V|-2=r(G)=\sum_{H \in X}r(H)=\sum_{H \in X}(2|V(H)|-2) \]
and
\[ |V|=\sum_{H \in X}|V(H)|-\sum_{v \in \theta(X)}(d_{X}(v)-1). \]
This implies that $\sum_{v \in \theta(X)}d_X(v) < 2|X|$ so there exists $H \in X$ with 
$|V(H) \cap \theta(X)| \leq 1$.
\end{proof}

\subsection{Global Rigidity}
\label{concludingremarks}

\begin{defn}
A framework $(G,p)$ on $S^1\times \bR$ is \emph{globally rigid} if every framework $(G,q)$ which satisfies the (Euclidean $3$-space) distance constraint equations $|p_{i}-p_{j}|=|q_{i}-q_{j}|$, for each edge $ij$ where $p_{i},p_{j},q_{i},q_{j}$ are points on $S^1\times \bR$ also satisfies $|p_{i}-p_{j}|=|q_{i}-q_{j}|$ for every pair of vertices $i,j$ of $G$.
\end{defn}

We now recall the celebrated characterisation of generic global rigidity in the plane. This is due, in its various parts, to Connelly \cite{Con}, 
Hendrickson \cite{Hen} and Jackson and Jord\'{a}n \cite{J&J}.
Giving a full $3$-dimensional combinatorial characterisation remains a hard open problem.

\begin{thm}
Let $G=(V,E)$ with $|V|\geq 4$ and let $p$ be generic. Then the following
 are equivalent:
\begin{enumerate}
\item[$(1)$] $(G,p)$ is globally rigid in $\bR^2$, 
\item[$(2)$] $G$ is $3$-connected and $(G,p)$ is redundantly rigid in the plane,
\item[$(3)$] $G$ can be formed from disjoint copies of $K_4$ by Henneberg $2$ 
moves and edge additions,
\item[$(4)$] $G$ is $3$-connected and $\R_2$-connected.
\end{enumerate}
\end{thm}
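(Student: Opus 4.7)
The plan is to prove the four conditions equivalent by a cycle of implications $(1)\Rightarrow(2)\Rightarrow(3)\Rightarrow(4)\Rightarrow(2)\Rightarrow(1)$, with the combinatorial equivalence $(2)\Leftrightarrow(4)$ serving as a bridge and the inductive content packaged in $(3)$. First I would establish the necessary conditions $(1)\Rightarrow(2)$: this is Hendrickson's classical argument. If $G$ fails to be $3$-connected, any $2$-vertex cut $\{u,v\}$ determines a reflection of one side of $G$ through the line $\overline{p(u)p(v)}$, producing an equivalent but generically non-congruent framework; if $G$ is not redundantly rigid then some edge $e$ lies in no $\R_2$-circuit, and deleting $e$ yields a nontrivial flex that one extends to a second realisation with the same edge lengths.

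Next, for $(2)\Leftrightarrow(4)$ I would mirror the proof of Theorem~\ref{Mconnectedthm*}: one direction is immediate from the $\R_2$-analogue of Corollary~\ref{corredundantcircuits} (every edge sits in a circuit, so $G$ is redundantly rigid, and $3$-connectedness is hypothesised), while the reverse uses the rank identity
\[
2|V|-3 \;=\; r(G) \;=\; \sum_{H\in X}(2|V(H)|-3)
\]
over the $\R_2$-connected components $X$ and the vertex-counting inequality $\sum_{v\in\theta(X)} d_X(v)<3|X|$ to force, in conjunction with $3$-connectedness, that $|X|=1$.

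For $(4)\Rightarrow(3)$ I would invoke the Jackson--Jordan inductive construction: any $\R_2$-connected graph admits either an admissible inverse Henneberg $2$ move or the deletion of an edge so that the resulting graph is again $\R_2$-connected (this is the $\R_2$ analogue of the open problem alluded to after Theorem~\ref{Mconnectedthm*}, which in the planar case they resolve). Induction from $K_4$ then gives $(3)$. The converse $(3)\Rightarrow(2)$ is easy: $K_4$ is $3$-connected and redundantly rigid in $\bR^2$, and one checks directly that both Henneberg $2$ moves and edge additions preserve $3$-connectedness and redundant rigidity (edge additions only strengthen both properties; Henneberg $2$ moves preserve $3$-connectedness by a standard local analysis and preserve membership in $M(2,3)$-circuits).

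The hard step is $(3)\Rightarrow(1)$, which supplies $(2)\Rightarrow(1)$ via the route $(2)\Rightarrow(4)\Rightarrow(3)$. Here I would use Connelly's sufficient condition: a generic framework $(G,p)$ in $\bR^2$ is globally rigid provided it admits an equilibrium stress whose stress matrix has maximal rank $|V|-3$. The base case $K_4$ has such a stress (the unique stress from its single circuit). One then shows that Connelly's condition is preserved by Henneberg $2$ moves (the delicate step: lift a stress on $G_v^{uw}$ by assigning a stress on the new triangle at $v$ so that the stress matrix rank grows by exactly one) and by edge additions (trivially, by extending a stress by zero on the new edge and observing that adding edges only increases matrix rank while adjoining no new vertex). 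The obstacle here is precisely the Henneberg~$2$ lifting, which requires a genericity argument to show that for almost all placements of the new vertex on the line $uw$ the rank of the extended stress matrix is $|V|-3$; this is the content of the Jackson--Jordan stress-matrix extension lemma and is the main technical hurdle.
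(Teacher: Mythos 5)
The paper does not prove this theorem; it is stated as a recollection of the classical planar global-rigidity characterisation and attributed to Connelly, Hendrickson, and Jackson--Jordan with citations but no argument. So there is no ``paper's own proof'' to compare against. Your sketch is essentially an accurate condensation of how the published result is assembled: Hendrickson's necessity conditions give $(1)\Rightarrow(2)$, the rank/counting argument over the $\R_2$-connected components gives $(2)\Leftrightarrow(4)$ (and is indeed what the paper adapts to the cylinder in the proof of Theorem~\ref{Mconnectedthm*}, so your planar version with $2|V|-3$ in place of $2|V|-2$ is the right analogue), Jackson--Jordan's inductive reduction gives $(4)\Rightarrow(3)$, and Connelly's stress-matrix sufficiency plus the preservation of full stress rank under Henneberg~$2$ and edge additions gives $(3)\Rightarrow(1)$.

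Two small clarifications you may want to tighten if this were to stand as a self-contained argument. First, your $(2)\Leftrightarrow(4)$ counting requires the observation that under $3$-connectedness each $\R_2$-component with $|X|\geq 2$ must meet $\theta(X)$ in at least three vertices; that is what makes the inequality $\sum_{v\in\theta(X)}d_X(v)<3|X|$ into a contradiction, and it deserves a sentence of justification. Second, the preservation of Connelly's condition under the Henneberg~$2$ move is not a genericity-of-placement argument in the sense you describe (the new vertex is not ``placed on the line $uw$'' — the original framework and a new generic point are used, and the stress is extended algebraically). The cleaner route is either Connelly's own Theorem~$1.5$ or the algebraic-number-theoretic argument of Jackson, Jordan and Szabadka cited in the paper; your description of the lifting is a little more heuristic than what those sources actually do, but the overall structure of the proof is right.
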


The analysis in this paper leads us to make the following conjecture.

\begin{con}\label{cylinderglobal}
Let $G=(V,E)$ with $|V|\geq 5$ and let $p$ be generic for $S^1\times \bR$. The following
 are equivalent:
\begin{enumerate}
\item[$(1)$] $(G,p)$ is globally rigid on $S^1\times \bR$, 
\item[$(2)$] $G$ is $2$-connected and $(G,p)$ is redundantly rigid on $S^1\times \bR$,
\item[$(3)$] $G$ can be formed from disjoint copies of $K_5\setminus e, K_4\sqcup K_4$ and $K_4 \veebar K_4$ by Henneberg $2$ 
moves, $1$-joins, $2$-joins, $3$-joins and edge additions,
\item[$(4)$] $G$ is $\R_{S^1\times \bR}$-connected.
\end{enumerate}
\end{con}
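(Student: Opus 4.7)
The plan is to prove the conjecture by establishing the combinatorial equivalences (2)$\Leftrightarrow$(3)$\Leftrightarrow$(4) and then the geometric equivalence (1)$\Leftrightarrow$(2), mirroring the strategy of \cite{J&J} for the planar case. The equivalence (2)$\Leftrightarrow$(4) is essentially delivered by Theorem \ref{Mconnectedthm*}: an $\R_\M$-connected graph is $2$-connected, and by Corollary \ref{corredundantcircuits} it is redundantly rigid because every edge lies in an $\R_\M$-circuit, while the converse is exactly the statement of the theorem. For (2)$\Leftrightarrow$(3) I would extend Theorem \ref{recursivethm1} from $\R_\M$-circuits to all $2$-connected redundantly rigid graphs by permitting edge additions at arbitrary stages of the construction. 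As Remark \ref{ordermoves} points out, edge additions cannot simply be deferred to the end of the sequence, so the heart of the argument is to show that any $\R_\M$-connected graph which is not a base graph admits either an inverse Henneberg $2$ move, an inverse $i$-sum, or an edge deletion whose output is again $\R_\M$-connected. The route I envisage is to analyse how $\R_\M$-circuits interact across cutpairs and non-trivial $3$-edge-cutsets, combining this with the admissible node analysis of Section \ref{ansec} applied inside individual $\R_\M$-circuits, in direct analogy with the main technical theorem of \cite{J&J}.

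The implication (1)$\Rightarrow$(2) is a Hendrickson-type necessary condition. If $G$ has a cut vertex $v$, then the isometry group of $\M$ fixing the generic point $p(v)$ contains a non-trivial involution (reflection through the horizontal plane at $p(v)$, or through the vertical plane through $p(v)$ and the cylinder axis), and applying such an involution to a single component of $G-v$ yields an equivalent, non-congruent framework; if $(G,p)$ is rigid but not redundantly rigid, then flexing $G-e$ through a singular configuration produces a second equivalent realisation by a standard argument. For (2)$\Rightarrow$(1) I would follow the stress-matrix strategy of Connelly \cite{Con}: for an $\R_\M$-connected graph one seeks a generic equilibrium stress $\omega$ whose stress matrix $\Omega$, taken modulo the tangent space of trivial motions on $\M$, has maximal possible rank $|V|-1$. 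An equivalent framework $(G,q)$ would then determine an affine map of $\bR^3$ preserving $\M$ setwise, and genericity together with the discreteness of the linear isometry group of $\M$ that fixes a generic point would force this map to be a congruence.

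The main obstacle will be the construction of a maximal-rank stress matrix in (2)$\Rightarrow$(1). Connelly's inductive argument in $\bR^d$ builds such stresses by gluing along vertex-disjoint unions of rigidity circuits, and relies on the $\binom{d+1}{2}$-dimensional trivial motion space to provide the transversality needed at each gluing step. On $\M$ the trivial motion space is only one-dimensional, so the corresponding transversality is much weaker, and it looks delicate to verify that a maximal-rank stress survives the $3$-sum operation (which introduces three new edges and no new vertices) or the interleaved edge-addition moves appearing in the extension of Theorem \ref{recursivethm1}. A natural approach is to build $\omega$ in parallel with that combinatorial induction, showing at each step that the Henneberg $2$ move, the three $i$-sum operations, and an edge addition each preserve maximality of the rank of $\Omega$, with the $3$-sum being the step I expect to be hardest to control and hence the crux of the whole programme.
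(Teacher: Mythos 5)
The statement you are trying to prove is Conjecture~\ref{cylinderglobal}, and the paper itself does not prove it: the only implication actually established there is $(2)\Leftrightarrow(4)$, via Theorem~\ref{Mconnectedthm*}, together with the easy observation (in the closing paragraphs of Section~\ref{concludingremarks}) that globally rigid frameworks on the cylinder are $2$-connected by a reflection argument. So what you have produced is a research plan for an open problem, not a proof, and it should not be presented as if it closes the gap. With that caveat, your roadmap is broadly consistent with the paper's own discussion, but there are two places where you diverge from, or underestimate, the difficulties the author flags.

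First, for $(2)\Leftrightarrow(3)$ the paper explicitly points to the ear-decomposition technique of Jackson and Jordan \cite{J&J} for $\R_2$-connected graphs as the model to emulate, and emphasises (Remark~\ref{ordermoves}) that edge additions must be interleaved with the other moves. You instead propose to analyse how $\R_\M$-circuits interact across cutpairs and $3$-edge-cutsets and to apply the admissible-node machinery of Section~\ref{ansec} inside individual circuits. This is not obviously wrong, but it does not engage with the central structural tool the paper expects to be needed (ear decompositions of $\R_\M$-connected graphs), and leaves entirely open the key technical step: exhibiting, for a general $\R_\M$-connected $G$, an inverse operation whose result is still $\R_\M$-connected. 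Nothing in the paper's Sections~\ref{ansec}--\ref{rmconsec} gives you that; the admissibility lemmas are proved for circuits only, and Lemma~\ref{admlem99} in particular does not extend verbatim once $G$ has excess edges.

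Second, for $(2)\Rightarrow(1)$ you propose a direct transplant of Connelly's stress-matrix argument, with the crux located at the $3$-sum. The paper's author is more pessimistic: he writes that on the cylinder ``we have a more complicated system of polynomials in which it is not clear how to separate the relevant variables,'' and notes that even the base step (showing a Henneberg~$2$ move preserves global rigidity) is unresolved. Your plan assumes that an equivalent $(G,q)$ yields an affine map of $\bR^3$ preserving $\M$, which is the cylinder analogue of Connelly's conclusion in Euclidean space, but this step is not known and is precisely where the ``separation of variables'' difficulty bites: the normal-vector rows of $R_\M(G,p)$ have no analogue in the Euclidean theory, so the stress-matrix/conic-at-infinity argument does not carry over without a new idea. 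Your identification of the one-dimensional trivial-motion space as a weakness in transversality is a reasonable observation, but the paper suggests the harder obstruction is algebraic rather than dimensional. In short: your sketch is a plausible starting point and aligns at a high level with the author's stated programme, but it does not resolve any of the genuinely open steps, and in a couple of places it understates difficulties the paper explicitly singles out.
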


For $|V|\leq 4$, $(G,p)$ is globally rigid on $S^1\times \bR$ if and only if $G$ is a complete graph.
Following the submission of this paper, $(1) \Rightarrow (2)$ has been confirmed in \cite{JMN}.
Theorem \ref{Mconnectedthm*} shows the equivalence of $(2)$ and $(4)$.

\section{Concluding Remarks}

Our conjectured characterisation would provide a sufficient condition for global rigidity on the cylinder that fails 
somewhat trivially in the plane.
Let $G$ contain a spanning subgraph $H$ which is a $\R_{S^1\times \bR}$-circuit and let $p$ be generic for $S^1\times \bR$. Then Conjecture \ref{cylinderglobal} implies that
$(G,p)$ is globally rigid on $S^1\times \bR$.
Remark \ref{ordermoves} illustrates why this does not characterise globally rigid frameworks on the cylinder.

The special case in which $G$ has the minimum possible number of edges $2|V|-1$ corresponding to \cite[Theorem $6.1$]{B&J} conjectures that the generically globally rigid graphs on the cylinder are exactly the $\R_{S^1\times \bR}$-circuits. 
To prove the minimal case it remains to show that 
the Henneberg $2$ and $i$-join moves preserve global rigidity.

The remaining combinatorial difficulty in Conjecture \ref{cylinderglobal} is in showing that every $\R_{S^1\times \bR}$-connected graph can be generated using only the construction 
moves in Theorem \ref{recursivethm1}. In the case of the plane this was done by Jackson and Jord\'{a}n \cite{J&J} who used the concept of an 
ear decomposition in a $\R_2$-connected graph. Such a theorem would complete the equivalence 
of $(2),(3)$ and $(4)$.

Conjecture \ref{cylinderglobal} would lead to an efficient algorithm for checking global rigidity. $2$-connectedness can be checked in 
linear time \cite{H&T} and redundant rigidity, via the pebble game \cite{H&J}, \cite{L&S}, can be checked in $O(|V|^2)$ time.

Finally we note that Theorems \ref{recursivethm2} and \ref{recursivethm1} do not easily extend to the case of circuits in $M^*(2,1)$. A higher level of connectivity will be required to guarantee an admissible node when a node even exists. Moreover circuits in $M^*(2,1)$ may contain cut-vertices and more elaborate $i$-join operations may be required. A characterisation of circuits in $M^*(2,1)$ would be a step towards proving the analogue of Conjecture \ref{cylinderglobal} for frameworks on a surface of revolution \cite{NOP2}, such as a cone \cite{JMN}.

\subsection{Acknowledgements}

We thank Bill Jackson for many productive discussions on this topic. Much of Section \ref{ansec} was inspired by the paper 
of Berg and Jord\'{a}n \cite{B&J}. The majority of this research was carried out at the Fields Institute, Toronto.

\end{document}